\numberwithin{equation}{section}
\theoremstyle{plain}
\newtheorem{theorem}{Theorem}[section]
\newtheorem{lemma}[theorem]{Lemma}
\theoremstyle{definition}
\newtheorem{remark}[theorem]{Remark}
\newtheorem{?}[theorem]{Problem}
\def\boxit#1{\leavevmode\hbox{\vrule\vtop{\vbox{\kern.33333pt\hrule
    \kern1pt\hbox{\kern1pt\vbox{#1}\kern1pt}}\kern1pt\hrule}\vrule}}
\newcommand{\f}[1]{\ifthenelse{\equal{#1}{1}}{(q;q)_\infty}{(q^{#1};q^{#1})_{\infty}}}
\begin{document}
\title[Congruences modulo powers of 3]{Congruences modulo powers of 3 for 2-color partition triples}

\author[D. Tang]{Dazhao Tang}

\address[Dazhao Tang]{College of Mathematics and Statistics, Chongqing University, Huxi Campus LD206, Chongqing 401331, P.R. China}
\email{dazhaotang@sina.com}

\date{\today}

\begin{abstract}
Let $p_{k,3}(n)$ enumerate the number of 2-color partition triples of $n$ where one of the colors appears only in parts that are multiples of $k$. In this paper, we prove several infinite families of congruences modulo powers of 3 for $p_{k,3}(n)$ with $k=1, 3$, and $9$. For example, for all integers $n\geq0$ and $\alpha\geq1$, we prove that
\begin{align*}
p_{3,3}\left(3^{\alpha}n+\dfrac{3^{\alpha}+1}{2}\right) &\equiv0\pmod{3^{\alpha+1}}
\end{align*}
and
\begin{align*}
p_{3,3}\left(3^{\alpha+1}n+\dfrac{5\times3^{\alpha}+1}{2}\right) &\equiv0\pmod{3^{\alpha+4}}.
\end{align*}
\end{abstract}

\subjclass[2010]{05A17, 11P83}

\keywords{Partition; congruences; 2-color partition triples}

\maketitle

\section{Introduction}
The 2-color partition triple function is defined by \cite{CW2017}:
\begin{align*}
\sum_{n=0}^{\infty}p_{k,3}(n)q^{n}=\dfrac{1}{(q;q)_{\infty}^{3}(q^{k};q^{k})_{\infty}^{3}}.
\end{align*}

Here and throughout the paper, we adopt the following customary notation on partitions and $q$-series:
\begin{align*}
(a;q)_{\infty}=\prod_{n=0}^{\infty}(1-aq^{n}),\quad |q|<1.
\end{align*}

Partition-theoretically, $p_{k,3}(n)$ can be interpreted as the number of 2-color partition triples of $n$ where one of the colors appears only in parts that are multiples of $k$. For instance, $p_{2,3}(2)=12$, since $2=2_{1}=2_{2}=2_{3}=2_{4}=2_{5}=2_{6}=1_{1}+1_{1}=1_{2}+1_{2}=1_{3}+1_{3}\\=1_{1}+1_{2}=1_{1}+1_{3}=1_{2}+1_{3}$.

Quite recently, Wang and Chern \cite{CW2017} established some Ramanujan-type congruences modulo 5, 7, and 11 for $p_{3,3}(n)$ via modular forms and standard $q$-series techniques. Following their work and relating to Hirschhorn's recent work on 3-colored partitions \cite{Hir2017}, we will obtain some infinite families of congruences modulo any powers of 3 for $p_{k,3}(n)$ with $k=1, 3$, and $9$. The main results of this paper are stated as follows:
\begin{theorem}\label{thm1:power 3}
For all integers $n\geq0$ and $\alpha\geq1$,
\begin{align}
p_{1,3}\left(3^{2\alpha-1}n+\dfrac{3^{2\alpha-1}+1}{4}\right) &\equiv0\pmod{3^{2\alpha-1}},\label{case 1:odd}\\
p_{1,3}\left(3^{2\alpha+1}n+\dfrac{7\times3^{2\alpha}+1}{4}\right) &\equiv0\pmod{3^{2\alpha+3}},\label{case 2:3n+1}\\
p_{1,3}\left(3^{2\alpha+1}n+\dfrac{11\times3^{2\alpha}+1}{4}\right) &\equiv0\pmod{3^{2\alpha+4}},\label{case 2:3n+2}\\
p_{1,3}\left(3^{2\alpha}n+\dfrac{5\times3^{2\alpha-1}+1}{4}\right) &\equiv0\pmod{3^{2\alpha}},\label{case 1:3n+1}\\
p_{1,3}\left(3^{2\alpha}n+\dfrac{3^{2\alpha+1}+1}{4}\right) &\equiv0\pmod{3^{2\alpha+1}}.\label{case 1:even}
\end{align}
\end{theorem}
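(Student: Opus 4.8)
The plan is to deduce all five families from a single two‑step $U_3$‑recursion, where $U_3\colon\sum a(n)q^n\mapsto\sum a(3n+r)q^n$ is applied repeatedly to the generating function of $p_{1,3}$ restricted to residue classes that become more refined at each stage. A first observation is that every progression appearing in \eqref{case 1:odd}--\eqref{case 1:even} lies inside $\{3n+1\}$: since $(q;q)_\infty^6\equiv(q^3;q^3)_\infty^2\pmod3$, one gets $\sum_n p_{1,3}(n)q^n\equiv 1/(q^3;q^3)_\infty^2\pmod3$, a series supported on exponents $\equiv0\pmod3$, whence $p_{1,3}(3n+1)\equiv p_{1,3}(3n+2)\equiv0\pmod3$, which is already \eqref{case 1:odd} at $\alpha=1$. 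So the whole theorem amounts to the assertion that the $3$‑adic valuation of the coefficients of $H(q):=\sum_n p_{1,3}(3n+1)q^n$ keeps improving along a prescribed tree of sub‑progressions.

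To iterate I would put $H$, and more generally each $\sum_n p_{1,3}(3^{\beta}n+c_\beta)q^n$, into a normal form $\sum_{n\ge0}p_{1,3}\!\left(3^{\beta}n+c_\beta\right)q^{n}=3^{\,e_\beta}\,E_\beta(q)\,\Phi_\beta\!\bigl(t(q)\bigr)$, where $t$ is one fixed modular function built from $(q;q)_\infty,(q^3;q^3)_\infty,(q^9;q^9)_\infty$; where $E_\beta$ is one of two explicit eta‑quotients according to the parity of $\beta$; and where $\Phi_\beta$ is a power series with $3$‑integral coefficients $x_\beta(k)$, $k\ge k_\beta$. Obtaining this normal form, and in particular the base cases $\beta=1,2$, is a $q$‑series exercise using the classical $3$‑dissections of $1/(q;q)_\infty$ and of the Borwein cubic theta functions $a(q),b(q),c(q)$; here $a(q)\equiv1\pmod3$ and $a^3=b^3+c^3$ are the crucial inputs, the former being responsible for the divisibility phenomena and the latter for trading denominators whenever one re‑dissects.

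The engine of the proof is a lemma describing $U_3$ on the normal form: $U_3\!\bigl(E_\beta\,t^{k}\bigr)=E_{\beta+1}\sum_{j}\gamma_{k,j}\,t^{j}$ for explicit integers $\gamma_{k,j}$ whose $3$‑adic valuations grow at least linearly in $k$ and which are supported on $j$ in a controlled range. Two features matter. First, applying $U_3$ once interchanges the two admissible eta‑quotients $E_{\mathrm{odd}}\leftrightarrow E_{\mathrm{even}}$, so the recursion is genuinely of period two; this is the source of the even/odd split in \eqref{case 1:odd}--\eqref{case 1:even} and of the fact that the modulus advances by $3^{2}$ over each pair of steps rather than by $3$ at every step. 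Second, the precise linear growth rate of $v_3(\gamma_{k,j})$ is exactly what calibrates the exponents: the baseline bound recovers $3^{2\alpha-1}$ in \eqref{case 1:odd} and $3^{2\alpha}$, $3^{2\alpha+1}$ in \eqref{case 1:3n+1}, \eqref{case 1:even}. I would prove this $U_3$‑identity either by direct algebraic manipulation of $a^3=b^3+c^3$ together with the dissections above, or, if the algebra is unwieldy, by recognising both sides as modular forms of bounded weight on a fixed $\Gamma_0(N)$ and checking finitely many coefficients; pinning down the sharp valuation constant is the delicate point.

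With the engine in hand, the five families follow by one induction on $\alpha$: starting from the base‑case normal form one alternately applies $U_3$, tracks the pair $\bigl(e_\beta,\{v_3(x_\beta(k))\}\bigr)$, and reads off the modulus on each target progression as $\min_k\bigl(e_\beta+v_3(x_\beta(k))\bigr)$. The two anomalously strong congruences \eqref{case 2:3n+1} and \eqref{case 2:3n+2}, with the jumps to $3^{2\alpha+3}$ and $3^{2\alpha+4}$, arise because in those particular sibling classes the lowest coefficients $x_\beta(k_\beta)$ (and possibly the next one) carry extra powers of $3$ beyond the generic bound, an internal divisibility that must be threaded through the induction intact. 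Thus, beyond discovering the function $t$ and the two prefactors $E_{\mathrm{odd}},E_{\mathrm{even}}$, I expect the main obstacle to be the $U_3$‑lemma with a sharp‑enough valuation estimate, and then the verification that the recursion propagates the extra divisibility in the \eqref{case 2:3n+1}, \eqref{case 2:3n+2} classes without any loss, i.e.\ that the gains of $2$ and $3$ in the exponent are stable under iteration.
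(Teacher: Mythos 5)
Your plan is essentially the paper's proof: your normal form $3^{e_\beta}E_\beta(q)\Phi_\beta(t(q))$ with $t=qE_3^{12}/E_1^{12}$ and two alternating eta-quotient prefactors is exactly the paper's pair of generating-function identities \eqref{gf1:odd}--\eqref{gf1:even} with coefficient matrix $a(j,k)$; your $U_3$-engine lemma with linearly growing valuations is Hirschhorn's identity $H(\zeta^{-i})=\sum_j m(i,j)T^{-j}$ combined with the bound $\nu_{3}(m(i,j))\geq\lfloor(9j-3i-1)/2\rfloor$; and the induction tracking $\min_k\bigl(e_\beta+\nu_3(x_\beta(k))\bigr)$ is the paper's estimate on $\nu_3(a(j,k))$, including the period-two gain of $3^2$ per pair of steps. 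The one place your description drifts from what actually happens is the endgame for \eqref{case 1:3n+1}, \eqref{case 2:3n+1}, \eqref{case 2:3n+2}: the extra powers of $3$ there are not an ``internal divisibility'' of the low coefficients that must be threaded through the induction, but arise in a single terminal step --- truncate the series to its $k=1$ term modulo a suitably high power of $3$ (all $k\geq2$ terms being negligible by the valuation bound) and explicitly $3$-dissect the surviving eta-quotient via $E_{1}^{3}=A(q^{3})-3qE_{9}^{3}$, which exhibits visible factors $3$, $9$, $27$ on the relevant residue classes.
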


\begin{theorem}
For all integers $n\geq0$ and $\alpha\geq1$,
\begin{align}
p_{3,3}\left(3^{\alpha}n+\dfrac{3^{\alpha}+1}{2}\right) &\equiv0\pmod{3^{\alpha+1}},\label{case 3:3n+1}\\
p_{3,3}\left(3^{\alpha+1}n+\dfrac{5\times3^{\alpha}+1}{2}\right) &\equiv0\pmod{3^{\alpha+4}}.\label{case 3:3n+2}
\end{align}
\end{theorem}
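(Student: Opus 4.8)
The plan is to obtain both congruences as outputs of a single inductive $3$-dissection. Write $f_m:=(q^m;q^m)_\infty$, so that $\sum_{n\ge0}p_{3,3}(n)q^n=1/(f_1^3f_3^3)$, and for $\alpha\ge1$ put
\[
P_\alpha(q):=\sum_{n\ge0}p_{3,3}\!\Big(3^{\alpha}n+\tfrac{3^{\alpha}+1}{2}\Big)q^n.
\]
Since $3^{\alpha}\cdot1+\tfrac{3^{\alpha}+1}{2}=\tfrac{3^{\alpha+1}+1}{2}$ and $3^{\alpha}\cdot2+\tfrac{3^{\alpha}+1}{2}=\tfrac{5\cdot3^{\alpha}+1}{2}$, if $P_\alpha=P_\alpha^{[0]}(q^3)+qP_\alpha^{[1]}(q^3)+q^2P_\alpha^{[2]}(q^3)$ denotes the $3$-dissection of $P_\alpha$, then $P_\alpha^{[1]}=P_{\alpha+1}$ and $P_\alpha^{[2]}$ is precisely the generating function appearing in \eqref{case 3:3n+2}. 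So \eqref{case 3:3n+1} is the assertion that all coefficients of $P_\alpha$ are divisible by $3^{\alpha+1}$, and \eqref{case 3:3n+2} that all coefficients of $P_\alpha^{[2]}$ are divisible by $3^{\alpha+4}$, i.e.\ by three more powers of $3$ than the ambient $3^{\alpha+1}$.

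The first step is to identify $P_1$. Here I would invoke the $3$-dissection of $1/f_1^3$ — the identity underlying Hirschhorn's analysis of $3$-coloured partitions \cite{Hir2017} — which expresses $1/f_1^3$ as $A(q^3)+qB(q^3)+q^2C(q^3)$ with $A,B,C$ explicit eta-quotients in $f_3$ and $f_9$ (equivalently, combinations of the Borwein cubic theta functions $a(q^3),b(q^3),c(q^3)$), the component $C$ already carrying a factor of $3$. As $1/f_3^3$ is a power series in $q^3$, multiplying out and keeping the terms with exponent $\equiv2\pmod3$ gives $P_1(q)=9L_1(q)$ for an explicit eta-quotient (or short combination of eta-quotients) $L_1$. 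This settles the $\alpha=1$ cases, provided one also checks directly that $L_1^{[2]}$ — the residue-$2$ component of the $3$-dissection of $L_1$ — is divisible by $27$.

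For the inductive step I would define $L_\alpha$ by $P_\alpha=3^{\alpha+1}L_\alpha$ and prove that every $L_\alpha$ lies in a fixed finitely generated $\mathbb{Z}$-module $\mathcal{M}$ of eta-quotients — concretely, $\mathcal{M}$ is spanned by the monomials of bounded degree in a single uniformiser, such as $qf_9^3/f_1^3$, multiplied by a fixed eta-quotient — and that $\mathcal{M}$ is stable under "$3$-dissect and retain residue $r$", the residue-$1$ and residue-$2$ components in fact landing in $3\mathcal{M}$ and $27\mathcal{M}$. This requires the degree-$3$ modular equation linking the uniformiser to its cubic transform, which is again read off from the closed forms of $A,B,C$. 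Granting the stability statement, $L_\alpha^{[1]}=3L_{\alpha+1}$ with $L_{\alpha+1}\in\mathcal{M}$ yields $P_{\alpha+1}=3^{\alpha+2}L_{\alpha+1}$ and hence \eqref{case 3:3n+1}, while $L_\alpha^{[2]}=27M_\alpha$ with $M_\alpha\in\mathcal{M}$ yields $P_\alpha^{[2]}=3^{\alpha+4}M_\alpha$ and hence \eqref{case 3:3n+2}.

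The main obstacle I anticipate is the third step: choosing the module $\mathcal{M}$ so that it is genuinely finite-dimensional \emph{and} genuinely stable, writing the induced dissection map on $\mathcal{M}$ as an explicit integer matrix, and verifying that its entries carry exactly the $3$-adic valuations that force the gains by $3$ and by $27$ — a slightly too-large or too-small choice of $\mathcal{M}$ recovers only weaker moduli. The remaining difficulties are computational rather than structural: deriving the $3$-dissection of $1/f_1^3$ and of the auxiliary quotients from the Borwein identities together with the Jacobi triple product and Euler's pentagonal number theorem, and confirming the base case $\alpha=1$ (and, if the recursion does not close up immediately, also $\alpha=2$) by direct $q$-expansion.
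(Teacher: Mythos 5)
Your plan is essentially the paper's proof: the paper computes the base case $\sum_n p_{3,3}(3n+2)q^n=9E_3^9/E_1^{15}$, then shows by induction that $\sum_n p_{3,3}\bigl(3^{j}n+\tfrac{3^{j}+1}{2}\bigr)q^n=\sum_{l\ge1}b(j,l)q^{l-1}E_3^{12l-3}/E_1^{12l+3}$, where the coefficients $b(j,l)$ are propagated by exactly the integer ``dissection matrix'' $m(i,j)$ you describe (Hirschhorn's expansion of $H(\zeta^{-i})$ in powers of $T^{-1}$), and both congruences then follow from the $3$-adic bound $\nu_3(m(i,j))\ge\lfloor(9j-3i-1)/2\rfloor$, which yields $\nu_3(b(j,k))\ge j+1+\lfloor(9k-9)/2\rfloor$ and, for the residue-$2$ component of the dominant $k=1$ term, an explicit extra factor of $27$. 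The one adjustment to your outline is that the stable module cannot be taken finite-dimensional --- the dissection of the $l$-th generator genuinely involves generators of arbitrarily large index --- but this is harmless precisely because the valuation of the coefficient of the $k$-th generator grows like $9k/2$, so that modulo the stated power of $3$ only the $k=1$ term survives.
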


\begin{theorem}\label{thm2:power 3}
For all integers $n\geq0$ and $\alpha\geq1$,
\begin{align}
p_{9,3}\left(3^{2\alpha-1}n+\dfrac{3^{2\alpha-1}+5}{4}\right) &\equiv0\pmod{3^{2\alpha}},\label{case 9:3n}\\
p_{9,3}\left(3^{2\alpha+1}n+\dfrac{7\times3^{2\alpha}+5}{4}\right) &\equiv0\pmod{3^{2\alpha+4}},\label{case 9-2:3n+1}\\
p_{9,3}\left(3^{2\alpha+1}n+\dfrac{11\times3^{2\alpha}+5}{4}\right) &\equiv0\pmod{3^{2\alpha+5}},\label{case 9-2:3n+2}\\
p_{9,3}\left(3^{2\alpha}n+\dfrac{5\times3^{2\alpha-1}+5}{4}\right) &\equiv0\pmod{3^{2\alpha+1}},\label{case 9-1:3n+1}\\
p_{9,3}\left(3^{2\alpha}n+\dfrac{3^{2\alpha+1}+5}{4}\right) &\equiv0\pmod{3^{2\alpha+2}}.\label{case 9:3n+2}
\end{align}
\end{theorem}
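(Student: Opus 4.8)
The plan is to run, for $p_{9,3}$, the same $3$-dissection-plus-induction scheme that underlies Theorem~\ref{thm1:power 3} and the preceding theorem on $p_{3,3}$; the only genuinely new ingredient is the factor $(q^9;q^9)_\infty^3$, which is a power series in $q^9$ and hence inert under the first two rounds of dissection. Writing $U_{3,r}$ for the operator $\sum a_m q^m\mapsto\sum_n a_{3n+r}q^n$, one has $U_{3,r}\bigl(f(q)\,g(q^3)\bigr)=U_{3,r}(f)(q)\,g(q)$ and $U_{3,r}\bigl(f(q)\,g(q^9)\bigr)=U_{3,r}(f)(q)\,g(q^3)$, so each dissection demotes a $(q^9;q^9)_\infty$ to a $(q^3;q^3)_\infty$ and then to a $(q;q)_\infty$. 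Using the classical $3$-dissection $\tfrac1{(q;q)_\infty^3}=A_0(q^3)+qA_1(q^3)+q^2A_2(q^3)$, with $A_0,A_1,A_2$ explicit eta-quotients satisfying $A_1\equiv0\pmod 3$ and $A_2\equiv0\pmod 9$ (the classical congruences for the $3$-colored partition function), the first dissection of $\sum_n p_{9,3}(n)q^n=\bigl((q;q)_\infty^3(q^9;q^9)_\infty^3\bigr)^{-1}$ gives
\begin{align*}
\sum_{n\ge 0}p_{9,3}(3n+r)q^n=\frac{A_r(q)}{(q^3;q^3)_\infty^3}\qquad(r=0,1,2),
\end{align*}
which is already of the shape met in the $p_{3,3}$ argument (there $\sum_n p_{3,3}(3n+r)q^n=A_r(q)/(q;q)_\infty^3$).

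For the base case $\alpha=1$ of \eqref{case 9:3n} I would invoke the trisection of Jacobi's cube: $(q;q)_\infty^3=\sum_{k\ge0}(-1)^k(2k+1)q^{k(k+1)/2}$ has vanishing $q^{3n+2}$-part and $q^{3n+1}$-part equal to $-3q(q^9;q^9)_\infty^3$, so, writing $X(q^3)$ for its $q^{3n}$-part, $(q;q)_\infty^3=X(q^3)-3q(q^9;q^9)_\infty^3$ with $X(q)\equiv(q;q)_\infty\pmod 3$. A geometric expansion then gives
\begin{align*}
\frac1{(q;q)_\infty^3(q^9;q^9)_\infty^3}=\sum_{j\ge0}\frac{3^{j}\,q^{j}\,(q^9;q^9)_\infty^{3j-3}}{X(q^3)^{j+1}},
\end{align*}
and extracting the terms with $j\equiv2\pmod 3$ yields
\begin{align*}
\sum_{n\ge0}p_{9,3}(3n+2)q^n=\frac{9(q^3;q^3)_\infty^3}{X(q)^3}+\frac{3^{5}\,q\,(q^3;q^3)_\infty^{12}}{X(q)^6}+\cdots\equiv 0\pmod 9,
\end{align*}
which is \eqref{case 9:3n} at $\alpha=1$; the same expansion shows $\sum_n p_{9,3}(3n+1)q^n$ is divisible by exactly $3$ and $\sum_n p_{9,3}(3n)q^n$ is a unit series, consistent with the theorem making no assertion on those residue classes.

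For $\alpha\ge 2$ and the side progressions the plan is to iterate: dissect $A_r(q)/(q^3;q^3)_\infty^3$ (equivalently the closed form just displayed) again and again. After one further application of $U_3$ the denominator becomes $(q;q)_\infty^3$, and from then on — using the $3$-dissections of $1/(q;q)_\infty$, of $(q;q)_\infty^3$, and of the eta-quotients $A_0,A_1,A_2$ together with their own $3$-dissection components — one should be able to show, exactly as in the proofs of Theorem~\ref{thm1:power 3} and the $p_{3,3}$ theorem, that the generating functions along all the progressions in \eqref{case 9:3n}--\eqref{case 9:3n+2} lie in a fixed $\mathbb Z$-module spanned by a finite set of eta-quotients on which the relevant $U_3$-type operator acts by an explicit integer matrix whose entries are divisible by prescribed powers of $3$. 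The ``main'' progression \eqref{case 9:3n} advances by a factor $3^2$ at each inductive step (applying $U_3$ twice), spinning off the side progressions \eqref{case 9-1:3n+1}--\eqref{case 9:3n+2} at the intermediate $3^{2\alpha}$- and $3^{2\alpha+1}$-levels, with their larger moduli coming from extra factors of $3$ in the relevant matrix entries. All of these progressions are sub-progressions of $n\equiv2\pmod 3$, just as all the progressions of Theorem~\ref{thm1:power 3} are sub-progressions of $n\equiv1\pmod 3$; since on the $n\equiv2\pmod 3$ class the generating function already carries the factor $9$ above, the whole induction runs one power of $3$ ahead of the $p_{1,3}$ induction, which is why the moduli are uniformly larger by $3$ and the offsets are shifted by $1$.

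The main obstacle is the $3$-adic bookkeeping, not the $q$-series algebra. To keep the induction self-sustaining it is not enough to bound $3$-adic valuations from below: one must pin them down exactly, which forces carrying a companion statement — a second congruence proved in tandem, certifying that a prescribed coefficient is divisible by $3^{j}$ but not $3^{j+1}$ — and checking that the ``finished'' side progressions and the continuing main progression interlock correctly at each half-level. Verifying that the transition matrix, reduced modulo the appropriate power of $3$, has exactly this valuation pattern — and, before that, assembling the small closed set of eta-quotients, their mutual $3$-dissections, and the $(q^9;q^9)_\infty^3$-inertness rules above — is where essentially all the work lies; once that is in place, the five congruences \eqref{case 9:3n}--\eqref{case 9:3n+2} fall out by reading off the leading coefficients.
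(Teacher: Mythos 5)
Your orientation is the paper's: Theorem \ref{thm2:power 3} is proved by running for $p_{9,3}$ exactly the machinery built for $p_{1,3}$ --- the dissection $E_{1}^{3}=A(q^{3})-3qE_{9}^{3}$, the huffing operator $H$, and induction on the number of dissections --- and your base-case computation (geometric expansion of $1/(X(q^{3})-3q(q^9;q^9)_\infty^{3})$, extraction of $j\equiv2\pmod 3$) is correct; it reproduces the paper's $c(1,1)=9$ and $\sum_{n}p_{9,3}(3n+2)q^{n}=9E_{3}^{6}/E_{1}^{12}$, and your observation that all five progressions sit inside $n\equiv2\pmod 3$ is also right. But what you have written is a plan, and the part you defer ("where essentially all the work lies") is the part the theorem needs. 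Moreover, two structural premises of your plan are off. First, the relevant space is \emph{not} a $\mathbb{Z}$-module spanned by a finite set of eta-quotients with a finite transition matrix: applying $H$ to $\zeta^{-i}$ yields $\sum_{j=1}^{i}m(i,j)T^{-j}$, and since the exponent $i$ grows at every step, the iteration produces unboundedly high powers of $qE_{3}^{12}/E_{1}^{12}$. That is exactly why the paper needs the infinite matrix $m(i,j)$, the coefficients $c(j,k)$ for all $k\ge1$, the identities \eqref{gf9:odd}--\eqref{gf9:even}, and the tail estimate $\nu_{3}(m(i,j))\ge\lfloor(9j-3i-1)/2\rfloor$. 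Second, your claim that one must pin down $3$-adic valuations exactly and carry a companion non-divisibility statement is unfounded: all five assertions are pure divisibilities, and lower bounds of the shape \eqref{odd estimate:9}--\eqref{even estimate:9} propagate through the recursion $c(j+1,k)=\sum_{i}c(j,i)m(\cdot,\cdot)$ via $\nu_{3}$ of a sum being at least the minimum; no exact valuations are used anywhere in the paper.

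Concretely, three ingredients are missing before your sketch becomes a proof. (i) The closed form for $H(\zeta^{-i})$ in powers of $T^{-1}$ (Lemma \ref{key lemma}), which is what turns "dissect again and again" into a computable recursion. (ii) The valuation estimates \eqref{odd estimate:9}--\eqref{even estimate:9} for $c(j,k)$, proved by induction from the bound on $\nu_{3}(m(i,j))$; these are what deliver the exponents $2\alpha$ and $2\alpha+2$ in \eqref{case 9:3n} and \eqref{case 9:3n+2}. (iii) The side progressions \eqref{case 9-2:3n+1}, \eqref{case 9-2:3n+2}, \eqref{case 9-1:3n+1} do not "fall out by reading off leading coefficients": as in the $p_{1,3}$ case one must first truncate the series modulo a suitable power of $3$ to the single term $c(\cdot,1)E_{3}^{12}/E_{1}^{18}$ (resp.\ $E_{3}^{6}/E_{1}^{12}$), then carry out one more explicit $3$-dissection of the resulting eta-quotient via $E_{1}^{3}=A(q^{3})-3qE_{9}^{3}$ and check that the $q^{3n+1}$ and $q^{3n+2}$ components carry explicit extra factors $9$ and $27$. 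None of these computations appear in your proposal, and the "finite spanning set" premise would break down if pursued literally.
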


Of course, for any positive integer $k=3m$, we can also investigate arithmetic properties modulo powers of 3 for $p_{k,3}(n)$  case by case via following the same line of proving Theorems \ref{thm1:power 3}--\ref{thm2:power 3}. However, the calculations get more and more tedious.

The remainder of this paper is organized as follows. In Section \ref{lemmas section}, we present the background material on the $H$ operator and some necessary lemmas. In Section \ref{main results}, we provide the proofs of Theorems \ref{thm1:power 3}--\ref{thm2:power 3}. We end with some remarks to motivate further
study.

\section{Preliminary results}\label{lemmas section}
To obtain the main results of this paper, we first introduce some necessary notations and terminology on $q$-series.

For notational convenience, we denote that
\begin{align*}
E_{j} :=E(q^{j})=(q^{j};q^{j})_{\infty}.
\end{align*}

The following 3-dissection identity was proved by Hirschhorn \cite[Eq. (21.3.3)]{Hirb2017}.
\begin{lemma}\label{dissection lemma}
There holds
\begin{align*}
E_{1}^{3} &=A(q^{3})-3qE_{9}^{3}.
\end{align*}
where
\begin{align*}
A(q)=\dfrac{E_{2}^{6}E_{3}}{E_{1}^{2}E_{6}^{2}}+3q\dfrac{E_{1}^{2}E_{6}^{6}}{E_{2}^{2}E_{3}^{3}}.
\end{align*}
\end{lemma}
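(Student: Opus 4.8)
This identity is recorded in \cite[Eq.~(21.3.3)]{Hirb2017}, so one may simply invoke it; for completeness I sketch a self-contained derivation. The natural starting point is Jacobi's identity
\begin{align*}
E_1^3=\sum_{n\geq0}(-1)^n(2n+1)q^{n(n+1)/2},
\end{align*}
together with the elementary observation that the triangular number $T_n:=n(n+1)/2$ satisfies $T_n\equiv1\pmod3$ when $n\equiv1\pmod3$ and $T_n\equiv0\pmod3$ when $n\equiv0,2\pmod3$. Consequently $E_1^3$ has no terms $q^{3m+2}$, and it suffices to compute its $q^{3m+1}$-part and its $q^{3m}$-part separately.

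First I would extract the $q^{3m+1}$-part by putting $n=3m+1$. Since $T_{3m+1}=9T_m+1$ and $(-1)^{3m+1}\bigl(2(3m+1)+1\bigr)=-3(-1)^m(2m+1)$, this part equals
\begin{align*}
\sum_{m\geq0}-3(-1)^m(2m+1)q^{9T_m+1}=-3q\,E_9^3
\end{align*}
by Jacobi's identity applied with $q$ replaced by $q^9$. This produces exactly the summand $-3qE_9^3$, so it remains to identify the $q^{3m}$-part of $E_1^3$ with $A(q^3)$.

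Combining the $n\equiv0$ and $n\equiv2\pmod3$ contributions --- writing an $n\equiv2$ term as $n=3j-1$ and then replacing $j$ by $-m$ --- one checks that this $q^{3m}$-part equals the bilateral series $\sum_{m\in\mathbb{Z}}(-1)^m(6m+1)q^{3m(3m+1)/2}$, so everything reduces to the evaluation
\begin{align*}
\sum_{m\in\mathbb{Z}}(-1)^m(6m+1)q^{m(3m+1)/2}=\frac{E_2^6E_3}{E_1^2E_6^2}+3q\,\frac{E_1^2E_6^6}{E_2^2E_3^3}.
\end{align*}
To prove this I would introduce a parameter $z$, apply the Jacobi triple product to obtain
\begin{align*}
\sum_{m\in\mathbb{Z}}(-1)^mz^mq^{m(3m+1)/2}=E_3\,(zq^2;q^3)_\infty(q/z;q^3)_\infty,
\end{align*}
and then apply the operator $z\frac{d}{dz}$ and specialize to $z=1$. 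Logarithmic differentiation turns the right-hand side into $E_1$ times a Lambert-type series, yielding
\begin{align*}
\sum_{m\in\mathbb{Z}}(-1)^m(6m+1)q^{m(3m+1)/2}=E_1\left[1+6\sum_{n\geq1}\left(\frac{q^{3n-2}}{1-q^{3n-2}}-\frac{q^{3n-1}}{1-q^{3n-1}}\right)\right]=E_1\cdot a(q),
\end{align*}
where $a(q)=\sum_{i,j\in\mathbb{Z}}q^{i^2+ij+j^2}$ is Borwein's cubic theta function. The required closed form is then precisely $E_1$ times the classical two-term eta-quotient representation of $a(q)$, which one may take from \cite{Hirb2017} or derive from the Borwein identity $a(q)=\varphi(q)\varphi(q^3)+4q\psi(q^2)\psi(q^6)$.

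The elementary splitting and the treatment of the $q^{3m+1}$-part are routine; the main obstacle is the last step, namely recognizing $a(q)$ as the specified two-term eta-quotient. The identity $\sum_{m\in\mathbb{Z}}(-1)^m(6m+1)q^{m(3m+1)/2}=E_1\,a(q)$ drops out cleanly from logarithmic differentiation, but passing from $\varphi(q)\varphi(q^3)+4q\psi(q^2)\psi(q^6)$ to the stated form requires some lengthy (though elementary) eta-quotient bookkeeping to clear the auxiliary factors $E_4$ and $E_{12}$. Alternatively, since $E_1^3$, $A(q^3)$ and $qE_9^3$ are all modular forms of weight $3/2$, the whole identity can in principle be verified mechanically by comparing finitely many $q$-coefficients.
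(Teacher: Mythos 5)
The paper offers no proof of this lemma at all: it is quoted verbatim from Hirschhorn \cite[Eq.~(21.3.3)]{Hirb2017}, so your opening sentence (simply invoking that reference) already matches everything the paper does. Your additional sketch of the classical derivation is sound and is essentially the standard route: the congruence $T_n\equiv 0,1,0\pmod 3$ for $n\equiv 0,1,2\pmod 3$ correctly kills the $q^{3m+2}$ part, the sign computation $(-1)^{3m+1}(6m+3)=-3(-1)^m(2m+1)$ together with $T_{3m+1}=9T_m+1$ gives the $-3qE_9^3$ term, and the recombination of the $n\equiv 0,2$ residues into the bilateral series $\sum_{m\in\mathbb{Z}}(-1)^m(6m+1)q^{3m(3m+1)/2}$ checks out, as does the $z\frac{d}{dz}$ argument yielding $E_1\,a(q)$ for the corresponding unilateral-variable series (the $z=1$ specialization gives $E_1$ by the pentagonal number theorem, and the logarithmic derivative gives the Borwein Lambert series).

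One caveat on your final step: passing from $a(q)=\varphi(q)\varphi(q^3)+4q\,\psi(q^2)\psi(q^6)$ to the two-term quotient $\dfrac{E_2^6E_3}{E_1^3E_6^2}+3q\dfrac{E_1E_6^6}{E_2^2E_3^3}$ is not mere eta-quotient bookkeeping --- the two decompositions are not term-by-term equal (note the coefficients $4q$ versus $3q$), so this is a genuinely different representation of $a(q)$ requiring its own theta-function identity. Since you propose to take that representation from the literature (where it does appear) or to verify the whole weight-$3/2$ identity by a finite coefficient check, the argument is complete, but you should not present that last conversion as routine.
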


Following Hirschhron \cite{Hirb2017, Hir2017}, we introduce a ``huffing'' operator $H$, given by
\begin{align*}
H\left(\sum_{n=0}^{\infty}a_{n}q^{n}\right)=\sum_{n=0}^{\infty}a_{3n}q^{3n}.
\end{align*}

As in Hirschhorn \cite{Hir2017}, we define an infinite matrix $\{m(i,j)\}_{i,j\geq1}$ by
\begin{align}\label{3-power table}
\begin{pmatrix}
9 &0 &0 &0 &0 &0 &\cdots\\ 6 &243 &0 &0 &0 &0 &\cdots\\ 1 &243 &6561 &0 &0 &0 &\cdots\\ 0 &90 &8748 &177147 &0 &0 &\cdots\\ 0 &15 &4860 &295245 &4782969 &0 &\cdots\\ \vdots &\vdots &\vdots &\vdots &\vdots &\vdots &\ddots
\end{pmatrix}
\end{align}
and for $i\geq4$, $m(i,1)=0$, and for $j\geq2$,
\begin{align}
m(i,j) &=27m(i-1,j-1)+9m(i-2,j-1)+m(i-3,j-1).\label{recu formula}
\end{align}
By induction, it follows immediately from \eqref{recu formula} that
\begin{lemma}\label{zero vaules}
The coefficients $m(i,j)\neq0$ if and only if
\begin{align}
\left\lfloor\dfrac{i+2}{3}\right\rfloor\leq j\leq i.\label{bibounded ineq}
\end{align}
\end{lemma}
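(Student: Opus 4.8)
The plan is to prove Lemma~\ref{zero vaules} by induction on $i$, tracking how the support (the set of column indices $j$ with $m(i,j)\neq 0$) propagates through the recurrence~\eqref{recu formula}. First I would record the base cases: reading off the matrix~\eqref{3-power table}, for $i=1,2,3$ the nonzero entries occupy exactly the columns predicted by~\eqref{bibounded ineq}, namely $j=1$ for $i=1$, $j\in\{1,2\}$ for $i=2$, and $j\in\{1,2,3\}$ for $i=3$; note $\lfloor 3/3\rfloor=\lfloor 4/3\rfloor=\lfloor 5/3\rfloor=1$, so the claimed lower bound is $1$ in each of these three rows, matching the data. These three rows are needed as simultaneous base cases because~\eqref{recu formula} is a three-term recurrence in $i$.

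Next, for the inductive step, fix $i\geq 4$ and assume the characterization holds for rows $i-1$, $i-2$, and $i-3$. Since $m(i,1)=0$ for $i\geq 4$ by hypothesis, I only consider $j\geq 2$, where $m(i,j)=27m(i-1,j-1)+9m(i-2,j-1)+m(i-3,j-1)$. For the upper bound $j\leq i$: each term on the right vanishes when $j-1$ exceeds the corresponding row index, i.e. when $j-1>i-1$, $j-1>i-2$, $j-1>i-3$ respectively; so if $j>i$ all three terms are zero and hence $m(i,j)=0$, giving $m(i,j)\neq 0\Rightarrow j\leq i$. For the lower bound $j\geq\lfloor (i+2)/3\rfloor$: I would show that if $j<\lfloor(i+2)/3\rfloor$ then $j-1$ is below the support threshold of each of rows $i-1,i-2,i-3$, so again every term vanishes. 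Concretely, one checks the elementary inequalities $\lfloor(i+2)/3\rfloor-1\leq \lfloor((i-3)+2)/3\rfloor\leq\lfloor((i-2)+2)/3\rfloor\leq\lfloor((i-1)+2)/3\rfloor$ (the first is just $\lfloor(i+2)/3\rfloor-1=\lfloor(i-1)/3\rfloor$), which forces $j-1<\lfloor((i-\ell)+2)/3\rfloor$ for $\ell=1,2,3$ whenever $j<\lfloor(i+2)/3\rfloor$, so $m(i-\ell,j-1)=0$ for all three $\ell$ by the induction hypothesis.

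The one genuinely delicate point—and the main obstacle—is the reverse direction of the lower bound inside the admissible range: for $\lfloor(i+2)/3\rfloor\leq j\leq i$ I must confirm that $m(i,j)$ is actually nonzero, not merely a sum of not-necessarily-zero terms that could cancel. Here I would use the crucial structural fact that \emph{all entries of the matrix are nonnegative} (indeed positive on the support): the base rows have nonnegative entries, and~\eqref{recu formula} has positive coefficients $27,9,1$, so nonnegativity propagates and no cancellation can ever occur. Thus $m(i,j)>0$ as soon as at least one of the three summands $m(i-1,j-1),m(i-2,j-1),m(i-3,j-1)$ is positive; since for $j$ in the stated range at least the term $27m(i-1,j-1)$ survives (one verifies $\lfloor((i-1)+2)/3\rfloor\leq j-1\leq i-1$ follows from $\lfloor(i+2)/3\rfloor\leq j\leq i$), the induction closes. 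I would state the nonnegativity as a side remark or fold it into the induction hypothesis, since it is the ingredient that upgrades ``the recurrence need not vanish'' to ``the entry is genuinely nonzero.''
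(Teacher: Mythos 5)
Your overall strategy is the right one, and indeed it is all the paper offers: the lemma is stated with only the remark that it ``follows immediately from \eqref{recu formula} by induction,'' so your careful treatment of the base rows $i=1,2,3$, the three-term propagation of the support, and especially your explicit observation that positivity of the entries and of the coefficients $27,9,1$ rules out cancellation, are exactly the ingredients a complete proof needs. The ``only if'' half of your argument (vanishing for $j>i$ and for $j<\lfloor(i+2)/3\rfloor$, the latter using $\lfloor(i+2)/3\rfloor-1=\lfloor(i-1)/3\rfloor=\lfloor((i-3)+2)/3\rfloor$) is correct as written.

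However, the witness you chose for the ``if'' half fails. You claim that for every $j$ with $\lfloor(i+2)/3\rfloor\leq j\leq i$ the summand $27m(i-1,j-1)$ is nonzero, on the grounds that $\lfloor(i+2)/3\rfloor\leq j$ implies $\lfloor((i-1)+2)/3\rfloor\leq j-1$. That implication requires $\lfloor(i+2)/3\rfloor\geq\lfloor(i+1)/3\rfloor+1$, which holds only when $i\equiv1\pmod 3$. Concretely, take $i=5$, $j=2$: this $j$ lies in the admissible range since $\lfloor 7/3\rfloor=2$, yet $m(4,1)=0$, so your designated term $27m(4,1)$ vanishes; the entry $m(5,2)=9m(3,1)+m(2,1)=15$ is nonzero only because of the \emph{other} two summands (similarly $m(6,2)=m(3,1)=1$ survives only through the third term). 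The repair is a short case split: if $j=\lfloor(i+2)/3\rfloor$ (the left edge of the support), then $j-1=\lfloor(i-1)/3\rfloor=\lfloor((i-3)+2)/3\rfloor$ and $j-1\leq i-3$ for $i\geq4$, so the summand $m(i-3,j-1)$ is positive; if instead $j\geq\lfloor(i+2)/3\rfloor+1$, then $j-1\geq\lfloor(i+1)/3\rfloor$ and $j-1\leq i-1$, so your summand $27m(i-1,j-1)$ is positive. With positivity precluding cancellation, either case gives $m(i,j)>0$, and the induction closes.
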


The following lemma is the main ingredient for our proof.
\begin{lemma}[Eq. (2.26), \cite{Hir2017}]\label{key lemma}
For any integer $i\geq1$,
\begin{align}
H\left(\dfrac{1}{\zeta^{i}}\right)=\sum_{j=1}^{\infty}\dfrac{m(i,j)}{T^{j}}=\sum_{j=1}^{i}\dfrac{m(i,j)}{T^{j}},\label{H operator}
\end{align}
where
\begin{align*}
\zeta=\dfrac{E_{1}^{3}}{qE_{9}^{3}}, \quad T=\dfrac{E_{3}^{12}}{q^{3}E_{9}^{12}}.
\end{align*}
\end{lemma}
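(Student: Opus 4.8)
The plan is to deduce \eqref{H operator} by induction on $i$ from one algebraic identity linking $\zeta$ and $T$; once that identity is in place, the operator $H$ turns the relation into exactly the recurrence \eqref{recu formula}. The identity I would establish is the \emph{resolvent relation}
\[
T=\zeta^{3}+9\zeta^{2}+27\zeta .
\]
By Lemma \ref{dissection lemma}, $\zeta=\dfrac{E_{1}^{3}}{qE_{9}^{3}}=L-3$, where $L:=\dfrac{A(q^{3})}{qE_{9}^{3}}$. Replacing $q$ by $\omega q$ and by $\omega^{2}q$ in Lemma \ref{dissection lemma}, with $\omega=e^{2\pi i/3}$ and using that $A(q^{3})$ and $E_{9}^{3}$ are fixed by $q\mapsto\omega q$, gives the three conjugates $E_{1}(\omega^{j}q)^{3}=A(q^{3})-3\omega^{j}qE_{9}^{3}$, whose product is $A(q^{3})^{3}-27q^{3}E_{9}^{9}$. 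On the other hand, grouping the factors of $\prod_{j=0}^{2}(\omega^{j}q;\omega^{j}q)_{\infty}$ according to the residue of $n$ modulo $3$ yields the classical evaluation $\prod_{j=0}^{2}E_{1}(\omega^{j}q)=E_{3}^{4}/E_{9}$, so that same product equals $E_{3}^{12}/E_{9}^{3}$. Hence $A(q^{3})^{3}-27q^{3}E_{9}^{9}=E_{3}^{12}/E_{9}^{3}$, and dividing by $q^{3}E_{9}^{9}$ gives $T=L^{3}-27=(\zeta+3)^{3}-27=\zeta^{3}+9\zeta^{2}+27\zeta$.

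Dividing the resolvent relation by $T\zeta^{3}$ and then by $\zeta^{i-3}$ produces, for $i\ge4$,
\[
\frac{1}{\zeta^{i}}=\frac{1}{T}\Big(\frac{1}{\zeta^{i-3}}+\frac{9}{\zeta^{i-2}}+\frac{27}{\zeta^{i-1}}\Big).
\]
Since $1/T=q^{3}E_{9}^{12}/E_{3}^{12}$ is a power series in $q^{3}$, the operator $H$ commutes with multiplication by $1/T$, so applying $H$ gives
\[
H\!\left(\frac{1}{\zeta^{i}}\right)=\frac{1}{T}\Big(H\!\left(\frac{1}{\zeta^{i-3}}\right)+9H\!\left(\frac{1}{\zeta^{i-2}}\right)+27H\!\left(\frac{1}{\zeta^{i-1}}\right)\Big).
\]
For the base cases I would compute $H(1/\zeta^{i})$ directly for $i=1,2$: rationalizing $1/E_{1}^{3}$ against the two conjugate factors and using the resolvent relation gives $\dfrac{1}{\zeta}=\dfrac{qA(q^{3})^{2}E_{9}^{6}+3q^{2}A(q^{3})E_{9}^{9}+9q^{3}E_{9}^{12}}{E_{3}^{12}}$, of which only the last summand survives $H$, so $H(1/\zeta)=9/T$; squaring, retaining only the parts graded by $q^{3}$ and $q^{6}$, and eliminating $A(q^{3})^{3}$ again via the resolvent relation yields $H(1/\zeta^{2})=6/T+243/T^{2}$, in agreement with the first two rows of \eqref{3-power table}, and $i=3$ then follows from the displayed recursion. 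Now assume $H(1/\zeta^{k})=\sum_{j\ge1}m(k,j)/T^{j}$ for $k=i-1,i-2,i-3$; substituting into the recursion and shifting the summation index gives $H(1/\zeta^{i})=\sum_{j\ge1}\big(27m(i-1,j-1)+9m(i-2,j-1)+m(i-3,j-1)\big)/T^{j}=\sum_{j\ge1}m(i,j)/T^{j}$ by \eqref{recu formula}. This completes the induction, and the reduction of the sum to $\sum_{j=1}^{i}$ is precisely Lemma \ref{zero vaules}.

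The only step that is not pure bookkeeping is the resolvent relation, and inside it the cube-root-of-unity manipulation of Lemma \ref{dissection lemma} together with the product evaluation $\prod_{j=0}^{2}(\omega^{j}q;\omega^{j}q)_{\infty}=E_{3}^{4}/E_{9}$; everything after $T=\zeta^{3}+9\zeta^{2}+27\zeta$ is routine. Alternatively, since the lemma is attributed to Hirschhorn, one may simply quote this cubic relation (and the matrix $\{m(i,j)\}$) from \cite{Hirb2017,Hir2017}.
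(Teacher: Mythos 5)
Your proof is correct, and it is essentially Hirschhorn's own derivation: the paper does not prove this lemma at all but simply quotes it as Eq.\ (2.26) of \cite{Hir2017}, so you have supplied the argument the paper outsources. The key cubic relation $T=\zeta^{3}+9\zeta^{2}+27\zeta$, the product evaluation $\prod_{j=0}^{2}E(\omega^{j}q)=E_{3}^{4}/E_{9}$, the base computations $H(1/\zeta)=9/T$ and $H(1/\zeta^{2})=6/T+243/T^{2}$, and the three-term induction all check out and match the table \eqref{3-power table} and the recurrence \eqref{recu formula}. One cosmetic point: your displayed recursion is stated for $i\ge 4$, so the case $i=3$ does not literally follow from it; you should instead use the same identity in the form $1/\zeta^{3}=T^{-1}\bigl(1+9/\zeta+27/\zeta^{2}\bigr)$ together with $H(1)=1$, which reproduces row $3$ of \eqref{3-power table} as you intend.
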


\section{Congruences modulo any powers of 3}\label{main results}
\subsection{Congruences for $p_{1,3}(n)$ modulo powers of 3}
In this subsection, we will establish \eqref{case 1:odd}--\eqref{case 1:even}. To describe the generating functions for the sequences in \eqref{case 1:odd} and \eqref{case 1:even}, we define another infinite matrix of natural integers $\{a(j,k)\}_{j,k\geq1}$ by
\begin{enumerate}[1)]
\item $a(1,1)=6$, $a(1,2)=243$, and $a(1,k)=0$ for $k\geq3$.
\item For all integers $j\geq1$ and $k\geq1$,
\begin{align*}
a(j+1,k)=
\begin{cases}
\sum_{i=1}^{\infty}a(j,i)m(4i,i+k)\quad &\textrm{if}~j~\textrm{is~odd}, \cr \sum_{i=1}^{\infty}a(j,i)m(4i+2,i+k)\quad &\textrm{if}~j~\textrm{is~even}.
\end{cases}
\end{align*}
\end{enumerate}
Notice that by Lemma \ref{zero vaules}, the summation in $2)$ is in fact finite.

To obtain \eqref{case 1:odd}--\eqref{case 1:even}, we need the following key theorem and lemmas.
\begin{theorem}
For any positive integer $j$,
\begin{align}
\sum_{n=0}^{\infty}p_{1,3}\left(3^{2j-1}n+\dfrac{3^{2j-1}+1}{4}\right)q^{n} &=\sum_{l=1}^{\infty}a(2j-1,l)q^{l-1}\dfrac{E_{3}^{12l-6}}{E_{1}^{12l}},\label{gf1:odd}\\
\sum_{n=0}^{\infty}p_{1,3}\left(3^{2j}n+\dfrac{3^{2j+1}+1}{4}\right)q^{n} &=\sum_{l=1}^{\infty}a(2j,l)q^{l-1}\dfrac{E_{3}^{12l}}{E_{1}^{12l+6}}.\label{gf1:even}
\end{align}
\end{theorem}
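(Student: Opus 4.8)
The plan is to prove the two generating-function identities \eqref{gf1:odd} and \eqref{gf1:even} simultaneously by induction on $j$, using the ``huffing'' operator $H$ and Lemma \ref{key lemma} as the engine of the induction step. First I would establish the base case. Starting from
\begin{align*}
\sum_{n=0}^{\infty}p_{1,3}(n)q^{n}=\frac{1}{E_{1}^{6}},
\end{align*}
I would apply Lemma \ref{dissection lemma} to expand $E_{1}^{3}=A(q^{3})-3qE_{9}^{3}$, so that $E_{1}^{6}=\bigl(A(q^{3})-3qE_{9}^{3}\bigr)^{2}$, and extract the arithmetic progression $n\equiv 1\pmod 3$ (equivalently, apply a dissection and then $H$, after multiplying by a suitable power of $q$). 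The cross term $-6qA(q^{3})E_{9}^{3}$ is the only one contributing to residue $1$, and dividing by $q$ and replacing $q^{3}\to q$ should yield, after writing everything over $E_{1}^{12}$ using $A(q)=\frac{E_{2}^{6}E_{3}}{E_{1}^{2}E_{6}^{2}}+3q\frac{E_{1}^{2}E_{6}^{6}}{E_{2}^{2}E_{3}^{3}}$ and the relation between $A$, $E_1^3$ and $E_9^3$, precisely $\sum_{n}p_{1,3}(3n+1)q^{n}=6\frac{E_{3}^{6}}{E_{1}^{12}}+243q\frac{E_{3}^{18}}{E_{1}^{24}}$, which is the $j=1$ case of \eqref{gf1:odd} with $a(1,1)=6$, $a(1,2)=243$. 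This matches the initialization of the matrix $\{a(j,k)\}$.

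Next comes the inductive step, which splits into two sub-steps according to the parity of the current index. Suppose \eqref{gf1:odd} holds for some $j$, i.e. the generating function for the progression $3^{2j-1}n+\frac{3^{2j-1}+1}{4}$ equals $\sum_{l}a(2j-1,l)q^{l-1}\frac{E_{3}^{12l-6}}{E_{1}^{12l}}$. I would rewrite each summand in terms of $\zeta=\frac{E_{1}^{3}}{qE_{9}^{3}}$ and $T=\frac{E_{3}^{12}}{q^{3}E_{9}^{12}}$: a short computation gives $q^{l-1}\frac{E_{3}^{12l-6}}{E_{1}^{12l}}=\bigl(\text{something like }\frac{E_{9}^{3}}{E_{3}^{6}}\bigr)\cdot\zeta^{-4l}$ up to the correct normalization, so that the series becomes $\sum_{l}a(2j-1,l)\,\zeta^{-4l}$ times a fixed factor. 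Applying $H$ (which picks out the progression multiplying the modulus by $3$) and invoking Lemma \ref{key lemma} with $i=4l$, I get $H(\zeta^{-4l})=\sum_{k}m(4l,k)T^{-k}$; collecting the coefficient of each power of $T$ and re-expanding $\zeta,T$ back into $E$-quotients should produce exactly $\sum_{l'}a(2j,l')q^{l'-1}\frac{E_{3}^{12l'}}{E_{1}^{12l'+6}}$, where the shift $k=i+l'$ in the definition $a(2j,l')=\sum_{i}a(2j-1,i)m(4i,i+l')$ accounts for the change in the exponent pattern from $12l-6$ over $12l$ to $12l'$ over $12l'+6$. The even-to-odd step is entirely parallel: starting from \eqref{gf1:even} for index $2j$, one rewrites the summands as $\zeta^{-(4l+2)}$ times a fixed factor (the extra $\zeta^{-2}=\frac{q^{2}E_{9}^{6}}{E_{1}^{6}}$ supplies the shift from $12l+6$ back down to $12l-6$ in the $E_1$-exponent), applies $H$, uses Lemma \ref{key lemma} with $i=4l+2$, and reads off $a(2j+1,l')=\sum_{i}a(2j,i)m(4i+2,i+l')$, recovering \eqref{gf1:odd} for $j+1$. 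The finiteness of all the sums is guaranteed by Lemma \ref{zero vaules}.

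The main obstacle I anticipate is bookkeeping the exact powers of $q$, $E_{3}$, $E_{9}$ when converting between the ``$E$-quotient'' form in which the generating functions are stated and the ``$\zeta,T$'' form in which $H$ acts cleanly — in particular verifying that the prefactor that gets stripped off before applying $H$ and reattached afterward is consistent, and that it transforms correctly under $q^{3}\mapsto q$ after huffing. One must check that $H$ applied to $q^{l-1}E_{3}^{12l-6}/E_{1}^{12l}$, after the substitution $q^{3}\to q$, genuinely lands in the span of $\{q^{l'-1}E_{3}^{12l'}/E_{1}^{12l'+6}\}$ with coefficients given by the claimed $m(4l,\cdot)$ entries; this is a matter of matching exponents on both sides and using $E_{3}$ (after $q^3\to q$) becoming $E_{1}$, $E_{9}$ becoming $E_{3}$, etc. A secondary point to get right is the precise index at which the $H$-operator's progression $n\equiv 0\pmod 3$ corresponds to the partition progression $3^{m}n+c$: one should verify that huffing the generating function for $3^{2j-1}n+\frac{3^{2j-1}+1}{4}$ indeed produces (after the scaling) the generating function for $3^{2j}n+\frac{3^{2j+1}+1}{4}$, i.e. that $3\cdot\frac{3^{2j-1}+1}{4}+\text{(shift from the stripped }q\text{-power)}=\frac{3^{2j+1}+1}{4}$ modulo the appropriate power of $3$, and similarly for the other parity. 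Once these index computations are pinned down once and for all, the induction runs mechanically, and the congruences in Theorem \ref{thm1:power 3} will follow by reading off the 3-adic valuations of the integers $a(j,l)$ forced by the recursion together with Lemma \ref{zero vaules}.
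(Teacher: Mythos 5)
Your proposal is correct and follows essentially the same route as the paper: rewrite each summand as a prefactor times $T^{l}\zeta^{-4l}$ (resp.\ $T^{l}\zeta^{-(4l+2)}$), apply $H$ via Lemma \ref{key lemma}, use Lemma \ref{zero vaules} to justify the index shift $k\mapsto k+l$, and read off the recursion defining $a(j,k)$. The only cosmetic difference is in the base case, where the paper simply applies $H(\zeta^{-2})$ with the table entries $m(2,1)=6$, $m(2,2)=243$ rather than expanding the dissection of $E_1^3$ by hand; also note the factor $T^{l}$ you strip off is not literally ``fixed'' (it depends on $l$), but it is a series in $q^{3}$ and so passes through $H$, which is all that is needed.
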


\begin{proof}
By the definition of $\zeta$, we see that
\begin{align}
\sum_{n=0}^{\infty}p_{1,3}(n)q^{n}=\dfrac{1}{E_{1}^{6}}=\zeta^{-2}\dfrac{1}{q^{2}E_{9}^{6}}.\label{gf:p_{1,3}}
\end{align}

Applying the $H$ operator on \eqref{gf:p_{1,3}} and picking out those terms of the form $q^{3n+1}$, after simplification, we find that
\begin{align}
\sum_{n=0}^{\infty}p_{1,3}(3n+1)q^{n}=6\dfrac{E_{3}^{6}}{E_{1}^{12}}+243q\dfrac{E_{3}^{18}}{E_{1}^{24}}.\label{initial case:1}
\end{align}

Next, we proceed by induction on $j$. According to \eqref{initial case:1}, we know that \eqref{gf1:odd} is true for $j=1$. Assume \eqref{gf1:odd} holds for some positive integer $j\geq1$. We can rewrite it as
\begin{align*}
\sum_{n=0}^{\infty}p_{1,3}\left(3^{2j-1}n+\dfrac{3^{2j-1}+1}{4}\right)q^{n}=\dfrac{1}{qE_{3}^{6}}\sum_{l=1}^{\infty}a(2j-1,l)T^{l}\zeta^{-4l}.
\end{align*}
Taking out those terms of the form $q^{3n+2}$ and applying Lemma \ref{key lemma}, we have
\begin{align}
 \sum_{n=0}^{\infty}p_{1,3}\left(3^{2j-1}(3n+2)+\dfrac{3^{2j-1}+1}{4}\right)q^{3n+2}\notag =&\dfrac{1}{qE_{3}^{6}}\sum_{l=1}^{\infty}a(2j-1,l)T^{l}H\left(\zeta^{-4l}\right)\notag\\
 =&\dfrac{1}{qE_{3}^{6}}\sum_{l=1}^{\infty}a(2j-1,l)T^{l}\left(\sum_{k=1}^{\infty}m(4l,k)T^{-k}\right).\label{term:3n+2}
\end{align}
According to Lemma \ref{zero vaules}, we obtain that $m(4l,k)\neq0$ when $\left\lfloor\dfrac{4l+2}{3}\right\rfloor\leq k$, thus we can suppose $k\geq l+1$. Now \eqref{term:3n+2} implies
\begin{align*}
 &\sum_{n=0}^{\infty}p_{1,3}\left(3^{2j+1}n+\dfrac{3^{2j+1}+1}{4}\right)q^{n}\\
 =&\dfrac{1}{qE_{1}^{6}}\sum_{l=1}^{\infty}\sum_{k=l+1}^{\infty}a(2j-1,l)m(4l,k)\left(\dfrac{qE_{3}^{12}}{E_{1}^{12}}\right)^{k-l} \quad(\textrm{replace}~k~\textrm{by}~k+l)\\
 =&\dfrac{1}{qE_{1}^{6}}\sum_{k=1}^{\infty}\sum_{l=1}^{\infty}a(2j-1,l)m(4l,k+l)\left(\dfrac{qE_{3}^{12}}{E_{1}^{12}}\right)^{k}\\
 =&\sum_{k=1}^{\infty}a(2j,k)q^{k-1}\dfrac{E_{3}^{12k}}{E_{1}^{12k+6}}.
\end{align*}
This implies that \eqref{gf1:even} holds for $j$. Similarly, we rewrite \eqref{gf1:even} as follows:
\begin{align*}
\sum_{n=0}^{\infty}p_{1,3}\left(3^{2j}n+\dfrac{3^{2j+1}+1}{4}\right)q^{n}=\dfrac{1}{q^{3}E_{9}^{6}}\sum_{l=1}^{\infty}a(2j,l)T^{l}\zeta^{-(4l+2)}.
\end{align*}
Picking out those terms of the form $q^{3n}$ and according to Lemma \ref{key lemma}, we obtain
\begin{align}
 \sum_{n=0}^{\infty}p_{1,3}\left(3^{2j}(3n)+\dfrac{3^{2j+1}+1}{4}\right)q^{3n} &=\dfrac{1}{q^{3}E_{9}^{6}}\sum_{l=1}^{\infty}a(2j,l)T^{l}H\left(\zeta^{-(4l+2)}\right)\nonumber\\
 &=\dfrac{1}{q^{3}E_{9}^{6}}\sum_{l=1}^{\infty}a(2j,l)T^{l}\left(\sum_{k=1}^{\infty}m(4l+2,k)T^{-k}\right).\label{term:3n}
\end{align}
By Lemma \ref{zero vaules}, we can see that $m(4l+2,k)\neq0$ when $\left\lfloor\dfrac{4l+4}{3}\right\rfloor\leq k$, thus we will suppose $k\geq l+1$. Now \eqref{term:3n} implies
\begin{align*}
 &\sum_{n=0}^{\infty}p_{1,3}\left(3^{2j+1}n+\dfrac{3^{2j+1}+1}{4}\right)q^{n}\\
 =&\dfrac{1}{qE_{3}^{6}}\sum_{l=1}^{\infty}\sum_{k=l+1}^{\infty}a(2j,l)m(4l+2,k)\left(\dfrac{qE_{3}^{12}}{E_{1}^{12}}\right)^{k-l} \quad(\textrm{replace}~k~\textrm{by}~k+l)\\
 =&\dfrac{1}{qE_{3}^{6}}\sum_{k=1}^{\infty}\sum_{l=1}^{\infty}a(2j,l)m(4l+2,k+l)\left(\dfrac{qE_{3}^{12}}{E_{1}^{12}}\right)^{k}\\
 =&\sum_{k=1}^{\infty}a(2j+1,k)q^{k-1}\dfrac{E_{3}^{12k-6}}{E_{1}^{12k}}.
\end{align*}
This implies that \eqref{gf1:odd} holds for $j+1$. Hence we finish the proof by induction.
\end{proof}

For any positive integer $n$, let $\nu_{3}(n)$ count the highest power of 3 that divides $n$. For convention, we denote $\nu_{3}(0)=\infty$. To prove \eqref{case 1:odd}--\eqref{case 1:even}, we need the following lemma to estimate the 3-adic powers of $a(j,k)$.
\begin{lemma}\label{estimate lemma}
For any positive integers $i\geq1$ and $j\geq1$,
\begin{align}
\nu_{3}(m(i,j)) &\geq\left\lfloor\dfrac{9j-3i-1}{2}\right\rfloor.\label{condition 1}
\end{align}
\end{lemma}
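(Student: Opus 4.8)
The natural approach is induction on $i$, using the linear recurrence \eqref{recu formula} together with the base cases recorded in the matrix \eqref{3-power table}. Write $f(i,j):=\left\lfloor\frac{9j-3i-1}{2}\right\rfloor$ for the claimed lower bound. By Lemma \ref{zero vaules} we only need to check the inequality when $\left\lfloor\frac{i+2}{3}\right\rfloor\leq j\leq i$, since otherwise $m(i,j)=0$ and $\nu_3(0)=\infty$ makes \eqref{condition 1} trivially true. First I would verify the base cases $i=1,2,3$ directly: for each nonzero entry $m(i,j)$ in the first three rows of \eqref{3-power table}, one computes $\nu_3$ and compares with $f(i,j)$. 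For instance $\nu_3(m(1,1))=\nu_3(9)=2\geq f(1,1)=\lfloor 5/2\rfloor=2$, $\nu_3(m(1,2))=\nu_3(243)=5\geq f(1,2)=\lfloor 14/2\rfloor=7$ — wait, this already fails, so the indexing of the matrix must be handled with care; the entry $243$ sits at position $(2,2)$ in \eqref{3-power table}, not $(1,2)$. I would therefore begin by pinning down exactly which $m(i,j)$ the displayed matrix encodes (reading row $i$, column $j$), then check $\nu_3(m(i,j))\geq f(i,j)$ for $i\in\{1,2,3\}$ and all relevant $j$.

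The inductive step is the heart of the argument. Assume \eqref{condition 1} holds for rows $i-1$, $i-2$, $i-3$ (with $i\geq 4$), and fix $j\geq 2$ with $\left\lfloor\frac{i+2}{3}\right\rfloor\leq j\leq i$. From \eqref{recu formula},
\begin{align*}
m(i,j)=27\,m(i-1,j-1)+9\,m(i-2,j-1)+m(i-3,j-1),
\end{align*}
so by the ultrametric inequality
\begin{align*}
\nu_3(m(i,j))\geq\min\Big\{3+\nu_3(m(i-1,j-1)),\ 2+\nu_3(m(i-2,j-1)),\ \nu_3(m(i-3,j-1))\Big\}.
\end{align*}
Applying the induction hypothesis to each term, it suffices to show
\begin{align*}
3+f(i-1,j-1)\ \geq\ f(i,j),\qquad 2+f(i-2,j-1)\ \geq\ f(i,j),\qquad f(i-3,j-1)\ \geq\ f(i,j).
\end{align*}
Now $f(i-1,j-1)=\left\lfloor\frac{9(j-1)-3(i-1)-1}{2}\right\rfloor=\left\lfloor\frac{9j-3i-7}{2}\right\rfloor=f(i,j)-3$, so the first inequality is an equality: $3+f(i-1,j-1)=f(i,j)$. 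Similarly $f(i-2,j-1)=\left\lfloor\frac{9j-3i-4}{2}\right\rfloor$; since $\frac{9j-3i-4}{2}=\frac{9j-3i-1}{2}-\frac32$, a short parity check gives $f(i-2,j-1)\in\{f(i,j)-2,\,f(i,j)-1\}$, hence $2+f(i-2,j-1)\geq f(i,j)$. Finally $f(i-3,j-1)=\left\lfloor\frac{9j-3i-1}{2}\right\rfloor=f(i,j)$ exactly. Thus all three bounds hold and the induction closes.

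One subtlety to address: when $j=\left\lfloor\frac{i+2}{3}\right\rfloor$, the index $j-1$ may fall strictly below the nonvanishing range $\left\lfloor\frac{(i-1)+2}{3}\right\rfloor\leq j-1$ for some of the rows $i-1,i-2,i-3$, in which case the corresponding $m(i-\ell,j-1)=0$ and contributes $\nu_3=\infty$, only strengthening the bound — so this causes no trouble, but it should be noted. I also need $j\geq 2$ for \eqref{recu formula} to apply; the case $j=1$ forces $i\leq 3$ by Lemma \ref{zero vaules} (since $\left\lfloor\frac{i+2}{3}\right\rfloor\leq 1$ means $i\leq 3$... in fact $i\leq 3$ gives $\lfloor (i+2)/3\rfloor\le 1$), which is covered by the base cases. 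The main obstacle is purely bookkeeping: getting the floor-function arithmetic and the matrix indexing exactly right, and confirming that the base-case entries of \eqref{3-power table} genuinely satisfy \eqref{condition 1} — which, given the equality $3+f(i-1,j-1)=f(i,j)$ driving the recursion, they must, since the dominant term $27\,m(i-1,j-1)$ propagates the bound tightly down the rows.
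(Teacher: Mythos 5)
Your proof is correct and is exactly the argument the paper has in mind: the paper's proof is the one-line remark that \eqref{condition 1} ``follows easily from \eqref{recu formula} and induction,'' and you have simply carried out that induction explicitly, with the key observations $3+f(i-1,j-1)=f(i,j)$, $2+f(i-2,j-1)\geq f(i,j)$, and $f(i-3,j-1)=f(i,j)$ all checking out, along with the base rows $i\leq 3$ of \eqref{3-power table}. Your mid-proof indexing correction (the entry $243$ is $m(2,2)$, not $m(1,2)=0$) resolves itself properly, so no gap remains.
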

\begin{proof}
Eq. \eqref{condition 1} follows easily from \eqref{recu formula} and induction on $i$, $j$.
\end{proof}

\begin{remark}
Two remarks on Lemma \ref{estimate lemma} are in order. First, Hirschhorn \cite[Eq. (4.1)]{Hir2017} obtained $\nu_{3}(m(i,j))\geq\dfrac{9j-3i-3}{2}$. From this perspective, Eq. \eqref{condition 1} can be viewed as an improvement on the lower bound of $\nu_{3}(m(i,j))$. On the other hand, in view of \eqref{3-power table}, we would like to emphasize that the lower bound given by Eq. \eqref{condition 1} is best possible.
\end{remark}

\begin{lemma}\label{impor lemma}
For any positive integers $j\geq1$ and $k\geq1$,
\begin{align}
\nu_{3}(a(2j-1,k)) &\geq2j-1+\delta_{k,1}+\left\lfloor\dfrac{9k-10}{2}\right\rfloor,\label{odd estimate}\\
\nu_{3}(a(2j,k)) &\geq 2j+1+\delta_{k,1}+\left\lfloor\dfrac{9k-10}{2}\right\rfloor,\label{even estimate}
\end{align}
where $\delta_{k,l}$ is the Kronecker delta function, it equals 1 when $k=l$ and 0 otherwise.
\end{lemma}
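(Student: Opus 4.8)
The plan is to prove both estimates simultaneously by induction on $j$, exploiting the recursive definition of $a(j,k)$ together with Lemma \ref{estimate lemma}. For the base case $j=1$, we have $a(1,1)=6$ and $a(1,2)=243=3^5$, while $a(1,k)=0$ for $k\geq3$; one checks directly that $\nu_3(6)=1\geq 1+1+\lfloor(9-10)/2\rfloor = 2 + (-1) = 1$ and $\nu_3(243)=5\geq 1+0+\lfloor(18-10)/2\rfloor = 1+4 = 5$, so \eqref{odd estimate} holds for $j=1$. (The $\delta_{k,1}$ term and the floor of a negative number make the $k=1$ case tight, which is why these particular correction terms appear.)

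For the inductive step, suppose \eqref{odd estimate} holds for a given $j$; I would establish \eqref{even estimate} for that same $j$ using the recursion $a(2j,k)=\sum_{i\geq1}a(2j-1,i)\,m(4i,i+k)$. Applying the inductive hypothesis to $a(2j-1,i)$ and Lemma \ref{estimate lemma} to $m(4i,i+k)$ with the substitution $(i,j)\mapsto(4i,i+k)$, each term in the sum has 3-adic valuation at least
\begin{align*}
\Bigl(2j-1+\delta_{i,1}+\Bigl\lfloor\tfrac{9i-10}{2}\Bigr\rfloor\Bigr)+\Bigl\lfloor\tfrac{9(i+k)-12i-1}{2}\Bigr\rfloor
=2j-1+\delta_{i,1}+\Bigl\lfloor\tfrac{9i-10}{2}\Bigr\rfloor+\Bigl\lfloor\tfrac{9k-3i-1}{2}\Bigr\rfloor.
\end{align*}
The heart of the matter is to show this quantity is at least $2j+1+\delta_{k,1}+\lfloor(9k-10)/2\rfloor$ for every $i\geq1$; summing terms all divisible by $3^N$ keeps divisibility by $3^N$, so this pointwise bound suffices. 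This reduces to an elementary inequality in the integers $i,k$ involving floor functions, which I would dispatch by splitting into the cases $i=1$ and $i\geq2$ (the former being where both Kronecker deltas can interact and where the bound is tightest), and within each case tracking parities of $i$ and $k$ to handle the floors. The analogous computation takes \eqref{even estimate} for $j$ to \eqref{odd estimate} for $j+1$ via $a(2j+1,k)=\sum_i a(2j,i)\,m(4i+2,i+k)$, the only change being the shift $4i\mapsto 4i+2$ in the second index of $m$, which only improves the bound.

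The main obstacle I anticipate is the bookkeeping in the floor-function inequality: because $\lfloor\cdot\rfloor$ is not additive, the naive estimate $\lfloor a\rfloor+\lfloor b\rfloor\geq\lfloor a+b\rfloor-1$ loses a unit that must be recovered from the slack in Lemma \ref{estimate lemma} (note we improved Hirschhorn's bound by exactly one, which is precisely what makes this work). I would organize the verification as a short lemma isolating the claim that for all integers $i\geq1$, $k\geq1$,
\begin{align*}
\delta_{i,1}+\Bigl\lfloor\tfrac{9i-10}{2}\Bigr\rfloor+\Bigl\lfloor\tfrac{9k-3i-1}{2}\Bigr\rfloor\geq 2+\delta_{k,1}+\Bigl\lfloor\tfrac{9k-10}{2}\Bigr\rfloor,
\end{align*}
and then the inductive engine runs mechanically. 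Everything else — that the sums are finite (guaranteed by Lemma \ref{zero vaules}), that valuation of a sum is at least the minimum of the valuations — is routine.
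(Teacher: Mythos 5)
Your overall strategy is the same as the paper's: induct on $j$, alternating between the odd and even cases via the recursions $a(2j,k)=\sum_i a(2j-1,i)m(4i,i+k)$ and $a(2j+1,k)=\sum_i a(2j,i)m(4i+2,i+k)$, bound each summand by the inductive hypothesis plus Lemma \ref{estimate lemma}, and use that the valuation of a sum is at least the minimum of the valuations. Your isolated inequality for the even step,
\begin{align*}
\delta_{i,1}+\left\lfloor\dfrac{9i-10}{2}\right\rfloor+\left\lfloor\dfrac{9k-3i-1}{2}\right\rfloor\geq 2+\delta_{k,1}+\left\lfloor\dfrac{9k-10}{2}\right\rfloor,
\end{align*}
does hold for all $i,k\geq1$ (for $i=1$ it reduces to $\lfloor(9k-4)/2\rfloor=\lfloor(9k-10)/2\rfloor+3\geq 2+\delta_{k,1}+\lfloor(9k-10)/2\rfloor$, and for $i\geq2$ there is ample slack), so that half of your argument is sound and in fact slightly cleaner than the paper's, which instead treats $k=1$ separately by computing $m(4,2)=90$, $m(8,3)=24$, $m(12,4)=1$ explicitly.

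However, there is a genuine gap in the odd step. Your claim that replacing $4i$ by $4i+2$ ``only improves the bound'' is backwards: Lemma \ref{estimate lemma} gives $\nu_3(m(4i+2,i+k))\geq\lfloor(9k-3i-7)/2\rfloor$, which is \emph{smaller} by $3$ than the even-step bound, while the target only drops by $2$ (you need a net gain of $0$ over the hypothesis rather than $+2$). The required pointwise inequality
\begin{align*}
\delta_{i,1}+\left\lfloor\dfrac{9i-10}{2}\right\rfloor+\left\lfloor\dfrac{9k-3i-7}{2}\right\rfloor\geq \delta_{k,1}+\left\lfloor\dfrac{9k-10}{2}\right\rfloor
\end{align*}
fails at $i=k=1$: the left side is $1+(-1)+(-1)=-1$ while the right side is $1+(-1)=0$, so your estimate yields only $\nu_3(a(2j+1,1))\geq 2j$ where $2j+1$ is needed. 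The repair is exactly what the paper does (or an equivalent observation): by Lemma \ref{zero vaules}, $m(4i+2,i+1)=0$ for $i\geq2$ and $m(6,2)=1$, so $a(2j+1,1)=a(2j,1)$ and $\nu_3(a(2j+1,1))=\nu_3(a(2j,1))\geq 2j+1$; equivalently, one may replace the Lemma \ref{estimate lemma} bound by $\max\bigl(0,\lfloor(9j-3i-1)/2\rfloor\bigr)$ since $\nu_3$ of a nonzero integer is nonnegative. For all other $(i,k)$ in the odd step your pointwise inequality does hold, so this single case is the only missing piece.
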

\begin{proof}
It is easy to see that \eqref{odd estimate} holds for $j=1$. Assume \eqref{odd estimate} is true for some $j\geq1$. Then we need to consider the following two cases:
\begin{enumerate}[1)]
\item $k=1$. Thanks to \eqref{bibounded ineq}, we find that $m(4i,i+1)=0$ if $i\geq4$. With the aid of \eqref{recu formula}, we find that
\begin{align*}
m(4,2)=90,\quad m(8,3)=24,\quad m(12,4)=1.
\end{align*}
Hence we obtain
\begin{align*}
\nu_{3}(a(2j,1)) &=\nu_{3}\left(\sum_{i=1}^{\infty}a(2j-1,i)m(4i,i+1)\right)\\
 &=\nu_{3}(90a(2j-1,1)+24a(2j-1,2)+a(2j-1,3))\\
 &\geq\min\left\{2+\nu_{3}(a(2j-1,1)),1+\nu_{3}(a(2j-1,2)),\nu_{3}(a(2j-1,3))\right\}\\
 &\geq\min\{2j+1,2j+4,2j+7\}\\
 &=2j+1.
\end{align*}
This asserts \eqref{even estimate} holds for $k=1$.

\item $k>1$. According to Lemma \ref{estimate lemma}, we get
\begin{align*}
\nu_{3}(a(2j,k)) &=\nu_{3}\left(\sum_{i=1}^{\infty}a(2j-1,i)m(4i,k+i)\right)\\
 &\geq\min_{i\geq1}\left(\nu_{3}(a(2j-1,i))+\nu_{3}(m(4i,k+i))\right)\\
 &\geq\min_{i\geq1}\left(2j-1+\delta_{i,1}+\left\lfloor\dfrac{9i-10}{2}\right\rfloor+\left\lfloor\dfrac{9k-3i-1}{2}\right\rfloor\right)\\
 &\geq2j+2+\left\lfloor\dfrac{9k-10}{2}\right\rfloor.
\end{align*}
\end{enumerate}
These two cases imply \eqref{even estimate} holds for $j$.

Similarly, in view of \eqref{bibounded ineq}, we know that $m(4i+2,i+1)=0$ if $i\geq2$. By computation, we find that $m(6,2)=1$. Hence
\begin{align*}
a(2j+1,1)=\sum_{i=1}^{\infty}a(2j,i)m(4i+2,i+1)=a(2j,1).
\end{align*}
One readily obtain
\begin{align*}
\nu_{3}(a(2j+1,1))=\nu_{3}(a(2j,1))\geq2j+1.
\end{align*}

For $k>1$, we can see that
\begin{align*}
\nu_{3}(a(2j+1,k)) &=\nu_{3}\left(\sum_{i=1}^{\infty}a(2j,i)m(4i+2,k+i)\right)\\
 &\geq\min_{i\geq1}\left(\nu_{3}(a(2j,i))+\nu_{3}(m(4i+2,k+i))\right)\\
 &\geq\min_{i\geq1}\left(2j+1+\delta_{i,1}+\left\lfloor\dfrac{9i-10}{2}\right\rfloor+\left\lfloor\dfrac{9k-3i-7}{2}\right\rfloor\right)\\
 &\geq2j+1+\left\lfloor\dfrac{9k-10}{2}\right\rfloor.
\end{align*}
Therefore \eqref{odd estimate} is true for $j+1$ and therefore holds for any positive integer $j$. The proof is completed by induction.
\end{proof}

The congruence \eqref{case 1:odd} follows from \eqref{gf1:odd} together with \eqref{odd estimate}, and the congruence \eqref{case 1:even} follows from \eqref{gf1:even} together with \eqref{even estimate}.

Notice that
\begin{align*}
\nu_{3}(a(2j-1,k)) &\geq2j-1+\left\lfloor\dfrac{9k-10}{2}\right\rfloor\geq2j+3
\end{align*}
for $k\geq2$.

It follows that, modulo $3^{2j+3}$,
\begin{align*}
\sum_{n=0}^{\infty}p_{1,3}\left(3^{2j-1}n+\dfrac{3^{2j-1}+1}{4}\right)q^{n} &\equiv a(2j-1,1)\dfrac{E_{3}^{6}}{E_{1}^{12}}.
\end{align*}

With the help of Lemma \ref{dissection lemma} and \cite[Eq. (5.4)]{Hir2017}, we can see that, modulo $3^{2j+3}$,
\begin{align*}
 &\sum_{n=0}^{\infty}p_{1,3}\left(3^{2j-1}n +\dfrac{3^{2j-1}+1}{4}\right) q^{n}\\
 \equiv&a(2j-1,1)\dfrac{E_{9}^{12}}{E_{3}^{42}}\bigg(A(q^{3})^{8}+12qA(q^{3})^{7}E_{9}^{3}+90q^{2}A(q^{3})^{6}E_{9}^{6}+432q^{3}A(q^{3})^{5}E_{9}^{9}\\
 &\quad+1539A(q^{3})^{4}E_{9}^{12}+3888q^{5}A(q^{3})^{3}E_{9}^{15}+7290q^{6}A(q^{3})^{2}E_{9}^{18}\\
 &\quad+8748q^{7}A(q^{3})E_{9}^{21}+6561q^{8}E_{9}^{24}\bigg).
\end{align*}

If we extract those terms of the form $q^{3n+1}$, after manipulations, we find that, modulo $3^{2j+3}$,
\begin{align*}
 &\sum_{n=0}^{\infty}p_{1,3}\left(3^{2j-1}(3n+1)+\dfrac{3^{2j-1}+1}{4}\right)q^{n}=\sum_{n=0}^{\infty}p_{1,3}\left(3^{2j}+\dfrac{5\times3^{2j-1}+1}{4}\right)q^{n}\\ \equiv&3a(2j-1,1)\dfrac{E_{3}^{12}}{E_{1}^{42}}\left(4A(q)^{7}E_{3}^{3}+513qA(q)^{4}E_{3}^{12}+2916q^{2}A(q)E_{3}^{21}\right).
\end{align*}

Since $\nu_{3}(a(2j-1,1))\geq2j-1$, this establishes \eqref{case 1:3n+1}.

Similarly,
\begin{align*}
\nu_{3}(a(2j,k)) \geq2j+1+\left\lfloor\dfrac{9k-10}{2}\right\rfloor\geq2j+5
\end{align*}
for $k\geq2$.

It follows that, modulo $3^{2j+5}$,
\begin{align*}
\sum_{n=0}^{\infty}p_{1,3}\left(3^{2j}n+\dfrac{3^{2j+1}+1}{4}\right)q^{n} &\equiv a(2j,1)\dfrac{E_{3}^{12}}{E_{1}^{18}}.
\end{align*}

In view of Lemma \ref{dissection lemma}, we obtain, modulo $3^{2j+5}$, ($\omega$ is a cube root of unity different from one)
\begin{align*}
 &\sum_{n=0}^{\infty}p_{1,3}\left(3^{2j}n+\dfrac{3^{2j+1}+1}{4}\right)q^{n}\\
 \equiv& a(2j,1)E_{3}^{12}\dfrac{\left(E(\omega q)^{3}E(\omega^{2}q)^{3}\right)^{6}}{\left(E(q)^{3}E(\omega q)^{3}E(\omega^{2}q)^{3}\right)^{6}}\\
 =&a(2j,1)E_{3}^{12}\dfrac{\left(A(q^{3})^{2}+3qA(q^{3})E_{9}^{3}+9q^{2}E_{9}^{6}\right)^{6}}{\left(\dfrac{E_{3}^{4}}{E_{9}}\right)^{18}}\\
 =&a(2j,1)\dfrac{E_{9}^{18}}{E_{3}^{60}}\bigg(A(q^{3})^{12}+18qA(q^{3})^{11}E_{9}^{3}+189q^{2}A(q^{3})^{10}E_{9}^{6}+1350q^{3}A(q^{3})^{9}E_{9}^{9}+\\
 &\quad+7290q^{4}A(q^{3})^{8}E_{9}^{12}+30618q^{5}A(q^{3})^{7}E_{9}^{15}+102789q^{6}A(q^{3})^{6}E_{9}^{18}\\
 &\quad+275562q^{7}A(q^{3})^{5}E_{9}^{21}+590490q^{8}A(q^{3})^{4}E_{9}^{24}+984150q^{9}A(q^{3})^{3}E_{9}^{27}\\
 &\quad+1240029q^{10}A(q^{3})^{2}E_{9}^{30}+1062882q^{11}A(q^{3})E_{9}^{33}+531441q^{12}E_{9}^{36}\bigg).
\end{align*}

Picking out those terms of the form $q^{3n+1}$ and $q^{3n+2}$, after simplification, we find that, modulo $3^{2j+5}$,
\begin{align*}
 &\sum_{n=0}^{\infty}p_{1,3}\left(3^{2j}(3n+1)+\dfrac{3^{2j+1}+1}{4}\right)q^{n}=\sum_{n=0}^{\infty}p_{1,3}\left(3^{2j+1}n+\dfrac{7\times3^{2j}+1}{4}\right)q^{n}\\
 \equiv&9a(2j,1)\dfrac{E_{3}^{18}}{E_{1}^{60}}\left(2A(q)^{11}E_{3}^{3}+810qA(q)^{8}E_{3}^{12}+30618q^{2}A(q)^{5}E_{3}^{21}+137781q^{3}A(q)^{2}E_{3}^{30}\right)
\end{align*}
and
\begin{align*}
 &\sum_{n=0}^{\infty}p_{1,3}\left(3^{2j}(3n+2)+\dfrac{3^{2j+1}+1}{4}\right)q^{n}=\sum_{n=0}^{\infty}p_{1,3}\left(3^{2j+1}n+\dfrac{11\times3^{2j}+1}{4}\right)q^{n}\\
 \equiv&27a(2j,1)\dfrac{E_{3}^{18}}{E_{1}^{60}}\left(7A(q)^{10}E_{3}^{6}+1134qA(q)^{7}E_{3}^{15}+21870q^{2}A(q)^{4}E_{3}^{24}+39366q^{3}A(q)E_{3}^{33}\right).
\end{align*}

Notice that $\nu_{3}(a(2j,1))\geq2j+1$, this proves \eqref{case 2:3n+1} and \eqref{case 2:3n+2}.

\subsection{Congruences for $p_{3,3}(n)$ modulo powers of 3}
Now, we apply the same method to investigate the arithmetic properties for $p_{3,3}(n)$.  Define
\begin{enumerate}[1)]
\item $b(1,1)=9$, and $b(1,k)=0$ for $k\geq2$.
\item For any integers $j\geq1$ and $k\geq1$,
\begin{align*}
b(j+1,k)=\sum_{i=1}^{\infty}b(j,i)m(4i+1,i+k).
\end{align*}
\end{enumerate}

\begin{theorem}
For any positive integer $j$,
\begin{align}
\sum_{n=0}^{\infty}p_{3,3}\left(3^{j}n+\dfrac{3^{j}+1}{2}\right)q^{n} &=\sum_{l=1}^{\infty}b(j,l)q^{l-1}\dfrac{E_{3}^{12l-3}}{E_{1}^{12l+3}}.\label{gf3:odd}
\end{align}
\end{theorem}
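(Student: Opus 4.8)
The plan is to prove \eqref{gf3:odd} by induction on $j$, following exactly the template used for the $p_{1,3}$ theorem just proved. First I would establish the base case $j=1$: starting from
\begin{align*}
\sum_{n=0}^{\infty}p_{3,3}(n)q^{n}=\dfrac{1}{E_{1}^{3}E_{3}^{3}},
\end{align*}
I would rewrite this in terms of $\zeta$ and $E_9$ (note $E_1^3 E_3^3 = q\,\zeta\,E_9^3\,E_3^3$, so the generating function is $\frac{1}{q\zeta E_3^3 E_9^3}$), apply the huffing operator $H$, and extract the terms $q^{3n+2}$ — since $\frac{3^1+1}{2}=2$ — using Lemma \ref{key lemma} with $i=1$, reading $m(1,1)=9$ off the matrix \eqref{3-power table}. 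After simplification this should yield
\begin{align*}
\sum_{n=0}^{\infty}p_{3,3}(3n+2)q^{n}=9\,\dfrac{E_{3}^{9}}{E_{1}^{15}},
\end{align*}
which matches the right side of \eqref{gf3:odd} with $b(1,1)=9$ and all other $b(1,k)=0$.

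For the inductive step, assuming \eqref{gf3:odd} holds for some $j\geq1$, I would rewrite the right-hand side in terms of $T$ and $\zeta$. The key bookkeeping: $\dfrac{E_3^{12l-3}}{E_1^{12l+3}}$ should be re-expressed, pulling out a fixed factor $\dfrac{1}{qE_3^3 E_9^3}\cdot\dfrac{1}{(\text{something})}$, so that the $l$-dependent part becomes $T^{l}\zeta^{-(4l+1)}$ — this is why the recursion for $b$ uses $m(4i+1,\cdot)$ rather than $m(4i,\cdot)$ or $m(4i+2,\cdot)$. Concretely I expect
\begin{align*}
\sum_{n=0}^{\infty}p_{3,3}\left(3^{j}n+\dfrac{3^{j}+1}{2}\right)q^{n}=\dfrac{1}{q^2 E_3^3 E_9^3}\sum_{l=1}^{\infty}b(j,l)\,T^{l}\,\zeta^{-(4l+1)}
\end{align*}
or a close variant. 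Then I would apply $H$, using Lemma \ref{key lemma} to replace $H(\zeta^{-(4l+1)})$ by $\sum_k m(4l+1,k)T^{-k}$, invoke Lemma \ref{zero vaules} to see $m(4l+1,k)\neq0$ forces $k\geq l+1$ (since $\lfloor(4l+3)/3\rfloor = l+1$), reindex $k\mapsto k+l$, recollect the $E_1,E_3$ powers, and identify the coefficient of $q^{k-1}\dfrac{E_3^{12k-3}}{E_1^{12k+3}}$ as $\sum_l b(j,l)m(4l+1,k+l)=b(j+1,k)$. One subtlety: because $\frac{3^j+1}{2}$ is even when... actually $\frac{3^j+1}{2}$ is always an integer and $3^j\cdot3 + \frac{3^j+1}{2}$ sits in a fixed residue class mod $3$; I must check which residue ($q^{3n}$, $q^{3n+1}$, or $q^{3n+2}$) to extract at each stage so that the argument $3^{j+1}n+\frac{3^{j+1}+1}{2}$ emerges — unlike the $p_{1,3}$ case this is a single recursion with no odd/even split, so presumably the same residue class is extracted every time, which I would verify by a short congruence computation modulo $3$.

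The main obstacle I anticipate is the precise algebraic rearrangement in the inductive step — tracking the exact powers of $E_1$, $E_3$, $E_9$ and powers of $q$ when converting between the $\frac{E_3^{a}}{E_1^{b}}$ form and the $T^l\zeta^{-m}$ form, and confirming that the fixed prefactor $\frac{1}{q^2 E_3^3 E_9^3}$ (or whatever it turns out to be) is exactly right so that after huffing and the shift $k\mapsto k+l$ everything reassembles into $\frac{E_3^{12k-3}}{E_1^{12k+3}}$ with no leftover. This is the same type of manipulation carried out in the proof of \eqref{gf1:odd}–\eqref{gf1:even}, so it is routine in spirit, but the bookkeeping must be done carefully; the choice of $m(4i+1,\cdot)$ in the definition of $b$ is the signal that the $\zeta$-exponent here is $\equiv 1 \pmod 4$, in contrast to the $\equiv 0$ and $\equiv 2$ exponents appearing in the $p_{1,3}$ analysis. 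Once the identity is in hand, the congruences \eqref{case 3:3n+1} and \eqref{case 3:3n+2} will follow from a companion $3$-adic valuation estimate for $b(j,k)$ (analogous to Lemma \ref{impor lemma}), proved by induction using Lemma \ref{estimate lemma}, together with a further $3$-dissection of the $k=1$ term as was done at the end of the $p_{1,3}$ subsection.
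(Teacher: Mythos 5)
Your plan coincides with the paper's proof: the same base case $\sum_n p_{3,3}(3n+2)q^n = 9E_3^9/E_1^{15}$, the same rewriting with prefactor $\tfrac{1}{q^2E_3^3E_9^3}$ and $T^l\zeta^{-(4l+1)}$, and the same use of Lemma \ref{key lemma} and Lemma \ref{zero vaules} followed by the shift $k\mapsto k+l$. The one point you left open resolves as you expect: in the inductive step one always extracts the $q^{3n+1}$ terms (since $3^j\cdot1+\tfrac{3^j+1}{2}=\tfrac{3^{j+1}+1}{2}$), even though the base case extracts $q^{3n+2}$.
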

\begin{proof}
By the similar reasons as above,
\begin{align*}
\sum_{n=0}^{\infty}p_{3,3}(n)q^{n}=\dfrac{1}{E_{1}^{3}E_{3}^{3}}=\zeta^{-1}\dfrac{1}{qE_{3}^{3}E_{9}^{3}}.
\end{align*}
Taking out those terms of the form $q^{3n+2}$ and invoking \eqref{H operator}, after simplification, we can see that
\begin{align}
\sum_{n=0}^{\infty}p_{3,3}(3n+2)q^{n}=9\dfrac{E_{3}^{9}}{E_{1}^{15}}.\label{initial case:2}
\end{align}

Next, we proceed by induction on $j$. By \eqref{initial case:2}, we know that \eqref{gf3:odd} holds for $j=1$. Suppose \eqref{gf3:odd} is true for some $j\geq1$. Notice that
\begin{align*}
\sum_{n=0}^{\infty}p_{3,3}\left(3^{j}n+\dfrac{3^{j}+1}{2}\right)q^{n} &=\dfrac{1}{q^{2}E_{3}^{3}E_{9}^{3}}\sum_{l=1}^{\infty}b(j,l)T^{l}\zeta^{-(4l+1)}.
\end{align*}
Taking out those terms of the form $q^{3n+1}$ and according to Lemma \ref{key lemma}, we obtain
\begin{align}
\sum_{n=0}^{\infty}p_{3,3}\left(3^{j}(3n+1)+\dfrac{3^{j}+1}{2}\right)q^{3n} &=\dfrac{1}{q^{3}E_{3}^{3}E_{9}^{3}}\sum_{l=1}^{\infty}b(j,l)T^{l}\left(\sum_{k=1}^{\infty}m(4l+1,k)T^{-k}\right).\label{term3:3n+1}
\end{align}

By Lemma \ref{zero vaules}, we obtain that $m(4l+1,k)\neq0$ when $\left\lfloor\dfrac{4l+3}{3}\right\rfloor\leq k$, thus we can suppose $k\geq l+1$. Now \eqref{term3:3n+1} implies
\begin{align*}
 &\sum_{n=0}^{\infty}p_{3,3}\left(3^{j+1}+\dfrac{3^{j+1}+1}{2}\right)q^{n}\\
 =&\dfrac{1}{qE_{1}^{3}E_{3}^{3}}\sum_{l=1}^{\infty}\sum_{k=l+1}^{\infty}b(j,l)m(4l+1,k)\left(\dfrac{qE_{3}^{12}}{E_{1}^{12}}\right)^{k-l} \quad(\textrm{replace}~k~\textrm{by}~k+l)\\
 =&\dfrac{1}{qE_{1}^{3}E_{3}^{3}}\sum_{k=1}^{\infty}\sum_{l=1}^{\infty}b(j,l)m(4l+1,k+l)\left(\dfrac{qE_{3}^{12}}{E_{1}^{12}}\right)^{k}\\
 =&\sum_{k=1}^{\infty}b(j+1,k)q^{k-1}\dfrac{E_{3}^{12k-3}}{E_{1}^{12k+3}}.
\end{align*}
This proves that \eqref{gf3:odd} is true for $j+1$. This finishes the proof by induction.
\end{proof}

\begin{lemma}
For any positive integer $j\geq1$, we have
\begin{align}
\nu_{3}(b(j,k)) &\geq j+1+\left\lfloor\dfrac{9k-9}{2}\right\rfloor.\label{estimate case 3}
\end{align}
\end{lemma}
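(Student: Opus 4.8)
The plan is to prove \eqref{estimate case 3} by induction on $j$, in close analogy with the proof of Lemma~\ref{impor lemma}. For the base case $j=1$, note that $b(1,1)=9$ gives $\nu_{3}(b(1,1))=2=1+1+\lfloor 0/2\rfloor$, while $b(1,k)=0$ for $k\ge2$ makes \eqref{estimate case 3} trivially valid there since $\nu_{3}(0)=\infty$.

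For the inductive step, suppose \eqref{estimate case 3} holds for a given $j\ge1$ and all $k\ge1$. By definition $b(j+1,k)=\sum_{i\ge1}b(j,i)\,m(4i+1,i+k)$, which is a finite sum by Lemma~\ref{zero vaules}. Applying the inductive hypothesis together with Lemma~\ref{estimate lemma} to each factor, I would estimate each summand by
\begin{align*}
\nu_{3}\big(b(j,i)\,m(4i+1,i+k)\big)
&\ge\Big(j+1+\Big\lfloor\tfrac{9i-9}{2}\Big\rfloor\Big)+\Big\lfloor\tfrac{9(i+k)-3(4i+1)-1}{2}\Big\rfloor\\
&=j+1+\Big\lfloor\tfrac{9i-9}{2}\Big\rfloor+\Big\lfloor\tfrac{9k-3i-4}{2}\Big\rfloor.
\end{align*}
Since $\nu_{3}$ of a finite sum is at least the minimum of the $\nu_{3}$ of its terms, it then remains to check the purely elementary inequality $\lfloor\tfrac{9i-9}{2}\rfloor+\lfloor\tfrac{9k-3i-4}{2}\rfloor\ge1+\lfloor\tfrac{9k-9}{2}\rfloor$ for every $i\ge1$; granting this, $\nu_{3}(b(j+1,k))\ge j+2+\lfloor\tfrac{9k-9}{2}\rfloor$, which is \eqref{estimate case 3} for $j+1$ and closes the induction.

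To verify that floor inequality I would split on $i$. When $i=1$ the left-hand side is $0+\lfloor\tfrac{9k-7}{2}\rfloor=1+\lfloor\tfrac{9k-9}{2}\rfloor$, so it holds with equality; this is the decisive term. When $i\ge2$, using $\lfloor m/2\rfloor\ge(m-1)/2$ for integers $m$, the left-hand side is at least $\tfrac{9i-10}{2}+\tfrac{9k-3i-5}{2}=3i+\tfrac{9k-15}{2}\ge\tfrac{9k-3}{2}>1+\lfloor\tfrac{9k-9}{2}\rfloor$, with room to spare.

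The argument is routine; the one point deserving attention is that the $i=1$ summand attains the bound \eqref{estimate case 3} with equality while every other summand strictly exceeds it, which is exactly what pins down the factor $j+1$ (rather than a larger power of $3$) in the statement. Note also that, unlike in the proof of Lemma~\ref{impor lemma}, no separate treatment of the case $k=1$ is needed here, since Lemma~\ref{estimate lemma} is already sharp enough to handle all $k\ge1$ uniformly.
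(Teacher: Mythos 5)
Your proposal is correct and follows essentially the same route as the paper: induction on $j$, bounding $\nu_{3}(b(j+1,k))$ by the minimum over $i$ of $\nu_{3}(b(j,i))+\nu_{3}(m(4i+1,i+k))$ via Lemma~\ref{estimate lemma}, with the $i=1$ term attaining equality. You supply more detail than the paper does on the final floor-function inequality, which the paper simply asserts; that verification checks out.
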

\begin{proof}
It is easy to check that \eqref{estimate case 3} holds for $j=1$. Assume \eqref{estimate case 3} is true for some $j\geq1$. By Lemma \ref{estimate lemma}, we find that
\begin{align*}
\nu_{3}(b(j+1,k)) &=\nu_{3}\left(\sum_{i=1}^{\infty}b(j,i)m(4i+1,i+k)\right)\\
 &\geq\min_{i\geq1}\nu_{3}\left(b(j,i)m(4i+1,i+k)\right)\\
 &\geq\min_{i\geq1}\left(j+1+\left\lfloor\dfrac{9i-9}{2}\right\rfloor+\left\lfloor\dfrac{9k-3i-4}{2}\right\rfloor\right)\\
 &\geq j+2+\left\lfloor\dfrac{9k-9}{2}\right\rfloor.
\end{align*}
This proves that \eqref{estimate case 3} is true for $j+1$. This ends the proof by induction.
\end{proof}

The congruence \eqref{case 3:3n+1} follows from \eqref{gf3:odd} together with \eqref{estimate case 3}.

It is easy to see that
\begin{align*}
\nu_{3}(b(j,k))\geq j+1+\left\lfloor\dfrac{9k-9}{2}\right\rfloor\geq j+5
\end{align*}
for $k\geq2$.

Similarly, modulo $3^{j+5}$, we obtain
\begin{align*}
 &\sum_{n=0}^{\infty}p_{3,3}\left(3^{j}n+\dfrac{3^{j}+1}{2}\right)q^{n} \equiv b(j,1)\dfrac{E_{3}^{9}}{E_{1}^{15}}\\
 =&b(j,1)\dfrac{E_{3}^{15}}{E_{1}^{51}}\bigg(A(q^{3})^{10}+15qA(q^{3})^{9}E_{9}^{3}+135q^{2}A(q^{3})^{8}E_{9}^{6}+810q^{3}A(q^{3})^{7}E_{9}^{9}\\
 &\quad+3645q^{4}A(q^{3})^{6}E_{9}^{12}+12393q^{5}A(q^{3})^{5}E_{9}^{15}+32805q^{6}A(q^{3})^{4}E_{9}^{18}\\
 &\quad+65610q^{7}A(q^{3})^{3}E_{9}^{21}+98415q^{8}A(q^{3})^{2}E_{9}^{24}+98415q^{9}A(q^{3})E_{9}^{27}\\
 &\quad+59049q^{10}E_{9}^{30}\bigg).
\end{align*}

Taking out these terms of the form $q^{3n+2}$, after simplification, we find that, modulo $3^{j+5}$,
\begin{align*}
 &\sum_{n=0}^{\infty}p_{3,3}\left(3^{j}(3n+2)+\dfrac{3^{j}+1}{2}\right)q^{n}=\sum_{n=0}^{\infty}p_{3,3}\left(3^{j+1}n+\dfrac{5\times3^{j}+1}{2}\right)q^{n}\\
 \equiv&27b(j,1)\dfrac{E_{3}^{15}}{E_{1}^{51}}\left(5A(q)^{8}E_{3}^{6}+459qA(q)^{5}E_{3}^{15}+3645q^{2}A(q)^{2}E_{3}^{24}\right).
\end{align*}

Since $\nu_{3}(b(j,1))\geq j+1$, this establishes \eqref{case 3:3n+2}.

\subsection{Congruences for $p_{9,3}(n)$ modulo powers of 3}
We present here the main results and omit their proofs because this case is similar to the case $k=1$. Define
\begin{enumerate}[1)]
\item $c(1,1)=9$, and $c(1,k)=0$ for $k\geq2$.
\item For any integers $j\geq1$ and $k\geq1$,
\begin{align*}
c(j+1,k)=
\begin{cases}
\sum_{i=1}^{\infty}c(j,i)m(4i,i+k)\quad &\textrm{if}~j~\textrm{is~odd}, \cr \sum_{i=1}^{\infty}c(j,i)m(4i+2,i+k)\quad &\textrm{if}~j~\textrm{is~even}.
\end{cases}
\end{align*}
\end{enumerate}

\begin{theorem}
For any positive integer $j$,
\begin{align}
\sum_{n=0}^{\infty}p_{9,3}\left(3^{2j-1}n+\dfrac{3^{2j-1}+5}{4}\right)q^{n} &=\sum_{l=1}^{\infty}c(2j-1,l)q^{l-1}\dfrac{E_{3}^{12l-6}}{E_{1}^{12l}},\label{gf9:odd}\\
\sum_{n=0}^{\infty}p_{9,3}\left(3^{2j}n+\dfrac{3^{2j+1}+5}{4}\right)q^{n} &=\sum_{l=1}^{\infty}c(2j,l)q^{l-1}\dfrac{E_{3}^{12l}}{E_{1}^{12l+6}}.\label{gf9:even}
\end{align}
\end{theorem}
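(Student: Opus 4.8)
The plan is to mimic exactly the structure of the proof of the analogous theorem for $p_{1,3}(n)$, since the generating function for $p_{9,3}(n)$ has the same ``shape'' in the variable $\zeta$. First I would record the starting identity
\begin{align*}
\sum_{n=0}^{\infty}p_{9,3}(n)q^{n}=\dfrac{1}{E_{1}^{3}E_{9}^{3}}=\zeta^{-1}\dfrac{1}{qE_{9}^{6}},
\end{align*}
apply the huffing operator $H$, and extract the terms of the form $q^{3n+2}$ (this is where the shift $5/4$ rather than $1/4$ will appear, coming from the residue of $(3^{1}+5)/4=2$). Using Lemma~\ref{key lemma} and simplifying, this should produce the base case
\begin{align*}
\sum_{n=0}^{\infty}p_{9,3}(3n+2)q^{n}=9\dfrac{E_{3}^{6}}{E_{1}^{12}},
\end{align*}
which matches \eqref{gf9:odd} at $j=1$ with $c(1,1)=9$.

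Next I would run the induction on $j$ in two alternating half-steps, exactly as in the $p_{1,3}$ case. Assuming \eqref{gf9:odd} for some $j\geq1$, I would rewrite the right-hand side as $\dfrac{1}{qE_{3}^{6}}\sum_{l}c(2j-1,l)T^{l}\zeta^{-4l}$, pick out the $q^{3n+2}$ terms, apply Lemma~\ref{key lemma} to replace $H(\zeta^{-4l})$ by $\sum_{k}m(4l,k)T^{-k}$, invoke Lemma~\ref{zero vaules} to restrict to $k\geq l+1$, reindex $k\mapsto k+l$, and collect the result into $\sum_{k}c(2j,k)q^{k-1}E_{3}^{12k}/E_{1}^{12k+6}$, which is \eqref{gf9:even}. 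For the reverse direction, assuming \eqref{gf9:even}, I would rewrite it as $\dfrac{1}{q^{3}E_{9}^{6}}\sum_{l}c(2j,l)T^{l}\zeta^{-(4l+2)}$, extract the $q^{3n}$ terms, apply Lemma~\ref{key lemma} with $H(\zeta^{-(4l+2)})=\sum_{k}m(4l+2,k)T^{-k}$, again restrict via Lemma~\ref{zero vaules} to $k\geq l+1$, reindex, and recover \eqref{gf9:odd} for $j+1$. The only genuine bookkeeping is checking that the arithmetic-progression shifts compose correctly: $3^{2j-1}(3n+2)+\tfrac{3^{2j-1}+5}{4}=3^{2j}n+\tfrac{3^{2j+1}+5}{4}$ and $3^{2j}(3n)+\tfrac{3^{2j+1}+5}{4}=3^{2j+1}n+\tfrac{3^{2j+1}+5}{4}$, both of which are direct computations.

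The main obstacle, such as it is, is purely notational: since $E_{1}^{3}E_{9}^{3}$ appears rather than $E_{1}^{6}$, the prefactor after the first huffing is $1/(qE_{9}^{6})$ instead of $1/(q^{2}E_{9}^{6})$, so the power of $\zeta$ extracted from the base generating function is $\zeta^{-1}$ (matching the $k=3$ case more than the $k=1$ case), yet the subsequent recursion indices $4l$ and $4l+2$ turn out to be the same as in the $p_{1,3}$ case because each huffing step contributes a factor of $4$ to the exponent. One must verify that this is consistent, i.e. that starting from $\zeta^{-1}/(qE_{9}^{6})$ and taking the $q^{3n+2}$ part indeed lands on $E_{3}^{6}/E_{1}^{12}$ (equivalently $T\zeta^{-4}/(qE_{3}^{6})$ up to the constant), after which the exponents $4l$, $4l+2$ propagate verbatim. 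Once the base case is pinned down correctly, the rest is a line-by-line transcription of the earlier argument with $a(\cdot,\cdot)$ replaced by $c(\cdot,\cdot)$ and $a(1,1)=6$ replaced by $c(1,1)=9$; accordingly I would state the theorem's proof briefly and refer to the $p_{1,3}$ computation for the details, as the paper itself signals it will do.
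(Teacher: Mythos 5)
Your proposal is correct and is exactly the argument the paper intends: the paper omits this proof, stating only that it is "similar to the case $k=1$," and your reduction $\frac{1}{E_1^3E_9^3}=\zeta^{-1}\frac{1}{qE_9^6}$, the base case $\sum_{n\ge0}p_{9,3}(3n+2)q^n=9E_3^6/E_1^{12}$ (from $H(\zeta^{-1})=m(1,1)/T=9/T$), and the two alternating induction half-steps with exponents $4l$ and $4l+2$ reproduce precisely the recursion defining $c(j,k)$. The arithmetic-progression checks and the rewriting of $q^{l-1}E_3^{12l-6}/E_1^{12l}$ and $q^{l-1}E_3^{12l}/E_1^{12l+6}$ as $\frac{1}{qE_3^6}T^l\zeta^{-4l}$ and $\frac{1}{q^3E_9^6}T^l\zeta^{-(4l+2)}$ are all accurate.
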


\begin{lemma}
For any positive integers $j\geq1$ and $k\geq1$,
\begin{align}
\nu_{3}(c(2j-1,k)) &\geq2j+\delta_{k,1}+\left\lfloor\dfrac{9k-10}{2}\right\rfloor,\label{odd estimate:9}\\
\nu_{3}(c(2j,k)) &\geq2j+2+\delta_{k,1}+\left\lfloor\dfrac{9k-10}{2}\right\rfloor.\label{even estimate:9}
\end{align}
\end{lemma}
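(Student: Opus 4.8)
The plan is to follow, essentially line for line, the double induction that established Lemma~\ref{impor lemma}, since the array $\{c(j,k)\}$ obeys exactly the same recurrences as $\{a(j,k)\}$ and differs only in its initial row, where $c(1,1)=9=3^{2}$ and $c(1,k)=0$ for $k\geq2$ (in place of $a(1,1)=6$, $a(1,2)=243$, $a(1,k)=0$ for $k\geq3$). So I would run an induction on $j$ that proves \eqref{odd estimate:9} and \eqref{even estimate:9} alternately: \eqref{odd estimate:9} at $j=1$ as the base case, then \eqref{odd estimate:9} at $j$ $\Rightarrow$ \eqref{even estimate:9} at $j$, then \eqref{even estimate:9} at $j$ $\Rightarrow$ \eqref{odd estimate:9} at $j+1$. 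The base case is immediate: $\nu_{3}(c(1,1))=2=2\cdot1+\delta_{1,1}+\lfloor(9-10)/2\rfloor$, and $\nu_{3}(c(1,k))=\infty$ for $k\geq2$.

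For the step from \eqref{odd estimate:9} at $j$ to \eqref{even estimate:9} at $j$ one has $c(2j,k)=\sum_{i\geq1}c(2j-1,i)\,m(4i,i+k)$ (as $2j-1$ is odd). When $k=1$, Lemma~\ref{zero vaules} forces $m(4i,i+1)=0$ for $i\geq4$, while \eqref{recu formula} gives $m(4,2)=90$, $m(8,3)=24$, $m(12,4)=1$; feeding the inductive hypothesis into $c(2j,1)=90\,c(2j-1,1)+24\,c(2j-1,2)+c(2j-1,3)$ gives $\nu_{3}(c(2j,1))\geq\min\{2+2j,\,5+2j,\,8+2j\}=2j+2$, exactly the claimed value. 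When $k\geq2$, Lemma~\ref{estimate lemma} together with the hypothesis yields
\[
\nu_{3}(c(2j,k))\geq\min_{i\geq1}\left(2j+\delta_{i,1}+\left\lfloor\dfrac{9i-10}{2}\right\rfloor+\left\lfloor\dfrac{9k-3i-1}{2}\right\rfloor\right),
\]
and a short floor-function check shows the minimum occurs at $i=1$, where the bracket equals $2j+3+\lfloor(9k-10)/2\rfloor$, which beats the required $2j+2+\lfloor(9k-10)/2\rfloor$.

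For the step from \eqref{even estimate:9} at $j$ to \eqref{odd estimate:9} at $j+1$ one has $c(2j+1,k)=\sum_{i\geq1}c(2j,i)\,m(4i+2,i+k)$ (as $2j$ is even). At $k=1$, Lemma~\ref{zero vaules} gives $m(4i+2,i+1)=0$ for $i\geq2$ and \eqref{recu formula} gives $m(6,2)=1$, so $c(2j+1,1)=c(2j,1)$, whence $\nu_{3}(c(2j+1,1))=\nu_{3}(c(2j,1))\geq2j+2$, matching $2(j+1)+\delta_{1,1}+\lfloor-1/2\rfloor$. At $k\geq2$,
\[
\nu_{3}(c(2j+1,k))\geq\min_{i\geq1}\left(2j+2+\delta_{i,1}+\left\lfloor\dfrac{9i-10}{2}\right\rfloor+\left\lfloor\dfrac{9k-3i-7}{2}\right\rfloor\right),
\]
and again the minimum sits at $i=1$, giving precisely $2j+2+\lfloor(9k-10)/2\rfloor=2(j+1)+\lfloor(9k-10)/2\rfloor$. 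This closes the induction, and then \eqref{case 9:3n}--\eqref{case 9:3n+2} follow from \eqref{gf9:odd}, \eqref{gf9:even}, these estimates, and the same $3$-dissection manipulations with Lemma~\ref{dissection lemma} used for $p_{1,3}(n)$.

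I expect the only genuine point of care — and the likeliest place for an arithmetic slip — to be the two $\min_{i\geq1}$ verifications: one must check that the Kronecker term $\delta_{i,1}$ exactly cancels the $-1$ produced by $\lfloor(9\cdot1-10)/2\rfloor=-1$, and that for $i\geq2$ the gain in $\lfloor(9i-10)/2\rfloor$ dominates the loss, relative to $\lfloor(9k-10)/2\rfloor$, in $\lfloor(9k-3i-1)/2\rfloor$ or $\lfloor(9k-3i-7)/2\rfloor$. These are the same two inequalities already handled in the proof of Lemma~\ref{impor lemma}, so no new idea is needed; everything else is routine bookkeeping, including re-deriving the handful of small entries $m(4,2),m(8,3),m(12,4),m(6,2)$ from \eqref{recu formula} and the seed matrix \eqref{3-power table}.
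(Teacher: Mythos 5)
Your proposal is correct and is exactly the argument the paper intends: the paper omits the proof of this lemma, stating it is similar to the $k=1$ case, and your double induction is a faithful adaptation of the proof of Lemma~\ref{impor lemma} with the shifted base case $\nu_{3}(c(1,1))=2$. All the numerical checks ($m(4,2)=90$, $m(8,3)=24$, $m(12,4)=1$, $m(6,2)=1$) and the two $\min_{i\geq1}$ verifications work out as you describe.
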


Notice that \eqref{gf9:odd} and \eqref{odd estimate:9} imply the congruence \eqref{case 9:3n}, \eqref{gf9:even} and \eqref{even estimate:9} imply the congruence \eqref{case 9:3n+2}, respectively.

The proofs of \eqref{case 9-2:3n+1}--\eqref{case 9-1:3n+1} are similar to \eqref{case 2:3n+1}--\eqref{case 1:3n+1}.

\section{Final remarks}
According to Lemma \ref{dissection lemma}, we find that
\begin{align*}
\sum_{n=0}^{\infty}p_{3,3}(n)q^{n} &=\dfrac{1}{E_{1}^{3}E_{3}^{3}}=\dfrac{1}{E_{3}^{3}}\dfrac{E(\omega q)^{3}E(\omega^{2}q)^{3}}
{E(q)^{3}E(\omega q)^{3}E(\omega^{2}q)^{3}}\\
 &=\dfrac{E_{9}^{3}}{E_{3}^{15}}\left(A(q^{3})^{2}+3qA(q^{3})E_{9}^{3}+9q^{2}E_{9}^{6}\right)
\end{align*}
and
\begin{align*}
\sum_{n=0}^{\infty}p_{3,3}(3n)q^{n} &=\dfrac{E_{3}^{3}A(q)^{2}}{E_{1}^{15}},\\
\sum_{n=0}^{\infty}p_{3,3}(3n+1)q^{n} &=3\dfrac{E_{3}^{6}A(q)}{E_{1}^{15}}.
\end{align*}
Therefore, we have
\begin{align}
\sum_{n=0}^{\infty}p_{3,3}(3n)q^{n}\sum_{n=0}^{\infty}p_{3,3}(3n+2)q^{n}=\left(\sum_{n=0}^{\infty}p_{3,3}(3n+1)q^{n}\right)^{2}.\label{p3,3-beau relation}
\end{align}

Similarly, for $p_{9,3}(n)$, we also deduce that
\begin{align}
\sum_{n=0}^{\infty}p_{9,3}(3n)q^{n}\sum_{n=0}^{\infty}p_{9,3}(3n+2)q^{n}=\left(\sum_{n=0}^{\infty}p_{9,3}(3n+1)q^{n}\right)^{2}.\label{p9,3-beau relation}
\end{align}

Interestingly, Eqs. \eqref{p3,3-beau relation} and \eqref{p9,3-beau relation} resemble Hirschhorn's result for $p_{-3}(n)$ \cite[Eq. (6.3)]{Hir2017}:
\begin{align*}
\sum_{n=0}^{\infty}p_{-3}(3n)q^{n}\sum_{n=0}^{\infty}p_{-3}(3n+2)q^{n}=\left(\sum_{n=0}^{\infty}p_{-3}(3n+1)q^{n}\right)^{2},
\end{align*}
where the $k$-colored partition function is defined by
\begin{align*}
\sum_{n=0}^{\infty}p_{-k}(n)q^{n}=\dfrac{1}{(q;q)_{\infty}^{k}}.
\end{align*}

On the other hand, Saikia and Boruah \cite[Theorem 4.3]{SB2017} obtained an infinite family of congruences for $3$-regular partition triples $T_{3}(n)$ modulo 9, where the generating function of $T_{\ell}(n)$ is defined by
\begin{align*}
\sum_{n=0}^{\infty}T_{\ell}(n)q^{n}=\dfrac{(q^{\ell};q^{\ell})_{\infty}^{3}}{(q;q)_{\infty}^{3}}.
\end{align*}

Following the similar strategy and applying the operator $H$, we can also obtain several infinite families of congruences modulo powers of 3 for $T_{3}(n)$ and $T_{9}(n)$similar to \eqref{case 1:odd}--\eqref{case 3:3n+2}.

Finally, define the sequences $p_{k,\ell}(n)$ and $t_{k,\ell}(n)$ by
\begin{align}
\sum_{n=0}^{\infty}p_{k,\ell}(n)q^{n} &=\dfrac{1}{(q;q)_{\infty}^{\ell}(q^{k};q^{k})_{\infty}^{\ell}},\label{gf:1}\\
\sum_{n=0}^{\infty}t_{k,\ell}(n)q^{n} &=\dfrac{(q^{k};q^{k})_{\infty}^{\ell}}{(q;q)_{\infty}^{\ell}}.\label{gf:2}
\end{align}

Taking $k=3$ or $9$, and $\ell=3m$ in  \eqref{gf:1} and \eqref{gf:2}, where $m$ is a given positive integer. Following the similar strategy in this paper, we can also investigate arithmetic properties modulo powers of 3 for $p_{k,\ell}(n)$ and $t_{k,\ell}(n)$ case by case.

With the help of modular forms, Atkin \cite{Atk1968} obtained the following powerful theorem for $k$-colored partitions.

\begin{theorem}[Theorem 1.1, \cite{Atk1968}]\label{powerful thm}
For any $k>0$ and $q=2,3,5,7$ or $13$. If $24n\equiv k\pmod{q^{r}}$, then $p_{-k}(n)\equiv0\pmod{q^{\frac{1}{2}\alpha r+\epsilon}}$, where $\epsilon=\epsilon(q,k)=O(\log k)$, and where $\alpha$ depends on $q$ and the residue of $k$ modulo $24$ according to a certain table.
\end{theorem}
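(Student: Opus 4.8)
The plan is to recast the statement in the language of modular forms and to run an induction on $r$ driven by the Atkin--Lehner operator $U_q$, in the same spirit as the $H$-operator induction used above for $p_{k,3}(n)$. First I would put $q_0=e^{2\pi i\tau}$ and record the identity
\[
\eta(24\tau)^{-k}=\sum_{n\geq0}p_{-k}(n)\,q_0^{24n-k},
\]
so that $\eta(24\tau)^{-k}$ is a weakly holomorphic modular form of weight $-k/2$ (half-integral when $k$ is odd) on $\Gamma_0(576)$ with a quadratic character, holomorphic on the upper half-plane with poles confined to the cusps. The hypothesis $24n\equiv k\pmod{q^{r}}$ is exactly the condition $q^{r}\mid 24n-k$ picking out the coefficients retained by $U_q^{r}$, since $U_q^{r}\bigl(\sum a_m q_0^{m}\bigr)=\sum a_{q^{r}m}\,q_0^{m}$. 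Hence it suffices to bound below, by $\tfrac12\alpha r+\epsilon$, the $q$-adic valuation of every coefficient of $U_q^{r}\bigl(\eta(24\tau)^{-k}\bigr)$.

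The engine is the observation that $q\in\{2,3,5,7,13\}$ is precisely the set of primes for which $\Gamma_0(q)$ has genus zero; for these $q$ the ring of modular functions with poles only at the cusps is generated by a single Hauptmodul $t_q$, and $U_q$ maps the finite-dimensional space of such functions with bounded pole order into one with a controlled, shifted pole order. Working modulo $q$ and using that $U_q$ tends to lower pole orders, one checks that for $r$ large the reduction of $U_q^{r}\bigl(\eta(24\tau)^{-k}\bigr)$ modulo $q$ lands in a fixed finite-dimensional $\mathbb{F}_q$-space $V$ stable under $U_q$, on which $U_q$ is nilpotent. The quantitative refinement --- tracking $q$-adic valuations rather than merely reducing mod $q$ --- shows that each application of $U_q$ contributes on average a factor $q^{\alpha/2}$, where $\alpha$ is read off from the order of vanishing of $\eta(24\tau)^{-k}$ at the cusps of $\Gamma_0(576q)$ via the standard formula for $\mathrm{ord}(\eta(d\tau))$ at a cusp. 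It is this cusp computation, with a case split according to the residue of $k$ modulo $24$, that produces the tabulated values of $\alpha$. The induction on $r$ then yields $\nu_q(p_{-k}(n))\geq\tfrac12\alpha r$, and the bounded correction $\epsilon=O(\log k)$ absorbs the finitely many $k$-dependent initial terms together with the dimension of $V$, which grows linearly in $k$ but enters the valuation estimate only logarithmically.

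The principal obstacle is the quantitative step: upgrading the mod-$q$ nilpotency of $U_q$ on $V$ to a uniform lower bound of the form $\nu_q(U_q(h))\geq\nu_q(h)+\tfrac12\alpha$, with an honest accounting of the finitely many exceptional lower-order terms, and pinning down the exact constant $\alpha$. Concretely this requires (i) choosing $t_q$ and an eta-quotient normalisation so that $U_q$ has an explicit matrix in the basis $\{t_q^{\,i}\}$; (ii) bounding from below the $q$-adic valuations of the entries of that matrix --- the analogue of Lemma~\ref{estimate lemma} above; and (iii) carrying out the cusp analysis at every cusp of $\Gamma_0(576q)$ to identify $\alpha$ and to verify that the bound is sharp. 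For $q=2$ and $q=3$ one must in addition handle half-integral weight (via Shimura's theory, or by squaring to pass to integral weight), which thickens the bookkeeping without altering the structure of the argument.
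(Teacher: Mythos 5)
The paper does not prove this statement at all: it is quoted verbatim from Atkin's 1968 paper as motivation for future work, so there is no internal proof to compare against. Judged on its own terms, your outline does capture the genuine architecture of Atkin's argument --- the passage to $\eta(24\tau)^{-k}$, the identification of the condition $24n\equiv k\pmod{q^{r}}$ with iterating $U_{q}$, the special role of the genus-zero primes $2,3,5,7,13$ and the Hauptmodul basis $\{t_{q}^{\,i}\}$, and an induction on $r$ controlled by $q$-adic valuations of the entries of the $U_{q}$-matrix, which is the exact analogue of the matrix $m(i,j)$ and Lemma~\ref{estimate lemma} in the present paper.

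However, as a proof it has a genuine gap: everything that makes the theorem true is deferred. The three items you list as ``the principal obstacle'' --- writing down the $U_{q}$-matrix explicitly, proving the entrywise valuation lower bounds, and carrying out the cusp analysis that pins down $\alpha$ and $\epsilon$ --- are precisely the content of Atkin's paper; without them nothing is established. Two of your intermediate assertions are also shakier than they look. First, the claimed nilpotency of $U_{q}$ modulo $q$ on a finite-dimensional space is neither the mechanism Atkin uses nor what the conclusion requires: what is needed is a per-step valuation gain, which alternates between two values averaging $\alpha/2$, exactly as the exponent alternates between $2\alpha-1$ and $2\alpha$ in Theorem~\ref{thm1:power 3}. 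Second, for $q=2,3$ with $k$ odd the half-integral-weight difficulty cannot simply be ``squared away,'' since squaring the form scrambles the individual coefficients whose valuations you must bound. So the proposal is a reasonable road map to Atkin's theorem, but it is not a proof.
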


It would be appealing to find analogous results for $p_{k,\ell}(n)$ and $t_{k,\ell}(n)$ for arbitrary $\ell=3m$ ($m\in\mathbb{N}_{+}$) and $k=3$ or 9, which parallel to Atkin's results for $q=3$ in Theorem \ref{powerful thm}.

\section*{Acknowledgement}
I am indebted to Shishuo Fu for his helpful comments and suggestions that have improved this paper to a great extent. I would like to acknowledge the referee for his/her careful reading and helpful comments on an earlier version of the paper. This work was supported by the National Natural Science Foundation of China (No.~11501061).


\begin{thebibliography}{99}

\bibitem{Atk1968}A. O. L. Atkin, Ramanujan congruences for $p_{-k}(n)$, \textit{Can. J. Math.} {\bf 20} (1968), 67--78.

\bibitem{Hir2017}M.~D.~Hirschhorn, Partitions in 3 colours, \textit{Ramanujan J.}, \textbf{45} (2) (2018), 399--411.

\bibitem{Hirb2017}M. D. Hirschhorn, \textit{The power of $q$}, Developments in Mathematics Vol. 49, Springer 2017.

\bibitem{SB2017}N. Saikia and C. Boruah, Congruences of $\ell$-regular partition triples for $\ell\in\{2,3,4,5\}$, \textit{Acta Math. Vietnam} {\bf 42} (2017), 551--561.

\bibitem{CW2017}C.~Wang and S.~Chern, Ramanujan-type congruences for 2-color partition triples, arXiv preprint (2017). (\href{http://arxiv.org/abs/1706.05667v1}{arXiv:1706.05667v1}).

\end{thebibliography}
\end{document}